\numberwithin{equation}{section}
\newcommand{\pointl}{\mathcal{L}}
\newcommand{\pointq}{\mathcal{Q}}
\newcommand{\scrm}{\mathscr{M}}
\newcommand{\inv}{{-1}}
\newcommand{\bbs}{\mathbb{S}}
\newcommand{\calc}{\mathcal{C}}
\newcommand{\bbn}{\mathbb{N}}
\newcommand{\real}{\mathbb{R}}
\newcommand{\uv}{{\boldsymbol{v}}}
\newcommand{\uu}{{\boldsymbol{\varpi}}}
\newcommand{\Xu}{\boldsymbol{X}_{\uu}}
\newcommand{\Sv}{S_{\uv}}
\newcommand{\Iv}{I_{\uv}}
\newcommand{\nullm}{\emptyset}
\newcommand{\rnz}{\real_0^{\mathbb{N}}}
\newcommand{\scrl}{\mathscr{L}}
\newcommand{\bbt}{\mathbb{T}}
\newcommand{\bfx}{\boldsymbol{X}}
\newcommand{\bbm}{\mathbb{M}}
\newcommand{\stas}{\mbox{St$\alpha$S}}
\newcommand{\beq}{\begin{equation}}
\newcommand{\eeq}{\end{equation}}
\newcommand{\alns}[1]{\begin{align*}#1\end{align*}}
\newcommand{\aln}[1]{\begin{align} #1 \end{align}}
\newcommand{\been}{\begin{enumerate}}
\newcommand{\een}{\end{enumerate}}
\newcommand{\eqd}{\,{\buildrel d \over =}\,}
\newcommand{\ckr}{C_c^+ (\bar{\real}_0)}
\newcommand{\zrer}{\bar{\real}_0}
\newcommand{\point}{\mathcal{P}}
\newcommand{\malpha}{m_\alpha}
\newcommand{\bbo}{\mathbbm{1}}
\newcommand{\tnk}{\tilde{N}_n^{(K)}}
\newcommand{\scrmrz}{\scrm_0}
\newcommand{\aub}{A_\uu^{(B)}}
\newcommand{\calf}{\mathcal{F}}
\newcommand{\dnkb}{N_n'^{ (K,B)}}
\newcommand{\poi}{\mathscr{P}}
\DeclareMathOperator{\distvg}{d_{vg}}
\DeclareMathOperator{\dtvx}{dx}
\newcommand{\upp}{{(p)}}
\newcommand{\upQ}{{(Q)}}
\newcommand{\upq}{{(q)}}
\newcommand{\bbf}{\mathbb{F}}
\newcommand{\tnkb}{\tilde{N}_n^{(K,B)}}
\newcommand{\bfe}{\boldsymbol{e}}
\newcommand{\tpjq}{\bbt_{pj}^\upQ}
\DeclareMathOperator{\dtv}{d}
\DeclareMathOperator{\regvar}{RV}
\DeclareMathOperator{\lamiid}{\lambda_{iid}}
\newtheorem{thm}{Theorem}[section]
\newtheorem{propn}[thm]{Proposition}
\newtheorem{lemma}[thm]{Lemma}
\newtheorem{cor}[thm]{Corollary}
\theoremstyle{remark}
\newtheorem{remark}[thm]{Remark}
\newtheorem{example}{Example}[section]
\theoremstyle{definition}
\newtheorem{defn}[thm]{Definition}
\newtheorem{ass}[thm]{Assumptions}
\DeclareMathOperator{\prob}{\mathbf{P}}
\DeclareMathOperator{\exptn}{\mathbf{E}}
\DeclareMathOperator{\probt}{\prob_{\bbt}}
\DeclareMathOperator{\mbfs}{\mathbf{S}}
\newcommand{\hlconv}{\stackrel{\mbox{\tiny{HL}}}{\longrightarrow}}
\newcommand{\vconv}{\stackrel{v}{\longrightarrow}}
\title{Extremes of Multi-type Branching Random Walks: \\ Heaviest Tail Wins}
\date{}
\author[1]{Ayan Bhattacharya}
\author[1]{Krishanu Maulik}
\author[2]{Zbigniew Palmowski}
\author[1]{Parthanil Roy}
\affil[1]{Statistics and Mathematics Unit, Indian Statistical Institute Kolkata}
\affil[2]{Faculty of Pure and Applied Mathematics, Wroc\l aw University of Science and Technology}
\begin{document}

\maketitle

\begin{abstract}

We consider a branching random walk on a multi($Q$)-type, supercritical Galton-Watson tree which satisfies Kesten-Stigum condition. We assume that the displacements associated with the particles of type $Q$ have regularly varying tails of index $\alpha$, while the other types of particles have lighter tails than that of particles of type $Q$. In this article, we derive the weak limit of the sequence of point processes associated with the positions of the particles in the $n^{th}$ generation. We verify that the limiting point process is a randomly scaled scale-decorated Poisson point process (SScDPPP) using the tools developed in \cite{bhattacharya:hazra:roy:2016}. As a consequence, we shall obtain the asymptotic distribution of the position of the rightmost particle in the $n^{th}$ generation.

\end{abstract}

\noindent{\bf{Key words and phrases.}} {Multi-type branching random walk, Extreme values, Regular variation, Point process, Cox process, Rightmost point.}\\

\noindent{\bf{2010 Mathematics Subject Classification.}} Primary 60J70, 60G55; Secondary 60J80.

\section{Introduction} \label{sec:introduction}

The behaviour of the maximal/minimal displacement of a supercritical branching random walk has been a subject of intense studies for a long time. The branching random walk itself appears in lots of applications in a very natural way. Initially, the attention focussed on the light-tailed case of weights where the particles reached its location by a sequence of small jumps and the asymptotic behaviour of the rightmost or the leftmost particle was obtained; see \cite{biggins1976, bramson:1978, hammersley1974, kingman1975}. Later, the second order approximations to the minimal displacement were studied by Hu and Shi \cite{hu:shi:2009} and Addario-Berry and Reed \cite{addario2009}. Similar analyses were made for a branching Brownian motion, see, e.g., \cite{bramson1983}, and for centered minima, see \cite{aidekon2013, BZ2012}. Recently, connections with tree indexed random walk and Gaussian free field were considered; see \cite{bramson:ding:zeitouni:2013} and references therein.

In this article, we focus on displacements with heavy-tailed distribution. The first seminal papers related in this direction are the works of Durrett \cite{durrett:1983, durrett:1979} who proved that the position of the rightmost particle is determined by the largest weight. Other related works on branching random walks with heavy-tailed displacements and their continuous parameter analogues include \cite{kyprianou:1999, gantert2000, lalley:shao:2013, berard2014, maillard:2015}. The point processes associated with branching random walk and branching brownian motion are extensively studied in recent years, see, e.g., \cite{arguin2012poissonian, arguin2013extremal, arguin2011genealogy, aidekon2013branching, madaule:2011, bhattacharya:hazra:roy:2014, bhattacharya:hazra:roy:2016}. This approach provides more insight allowing identification of all order statistics of the position of the particle.

With motivations from \cite{biggins:2005} and the works mentioned above, we analyze the point process induced by a branching random walk with heavy-tailed displacements continuing the research initiated by Bhattacharya et al.\ \cite{bhattacharya:hazra:roy:2016}, and generalize it to the multi-type case, allowing dependence between weights. More formally, in this paper we consider a multi-type branching random walk with heavy tailed increments. We start with branching process with $Q$ types of particles satisfying generalized Kesten-Stigum condition: each particle of type $q\in \{1,\ldots, Q\}$ produces $k_p$ children of type $p$, for all $1 \le p \le Q$, in the next generation with probability $p^\upq(k_1, k_2, \ldots, k_Q)$. Next we attach a random vector (of random length) $\bfx^\upp = (X^\upp_1,\ldots, X^\upp_{k_p})$ to these $k_p$ children, called displacements or weights. We shall assume that in some sense the vector $\bfx^{(Q)}$ is a regularly varying and dominates all other type of vectors $\bfx^{(p)}$ ($p=1,\ldots, Q-1$); see Assumptions~\ref{chap4_ass:displacement_multi} for details.

The position of the particle associated with the vertex $\uv \in \mathbb{V}$ of the corresponding Galton-Watson
tree $\mathbb{V}$ is given by: $\Sv = \sum_{\uu \in \Iv} \Xu$, that is, it is given by its own displacement and sum of displacements of its parents (counted up to the root), where $\Iv$ denotes the path form the root to the vertex $\uv$ and $\Xu$ denotes the collection of displacements $\{\bfx^\upp\}$ corresponding to all types $p$ of offsprings of the vertex $\uv$. In this article, we prove that the weak limit of the point process of appropriately normalized displacements of $n^{th}$ generation:
\aln{
N_n = \sum_{|\uv| =n} \delta_{b_n^{\inv} \Sv} \nonumber
}
(where the scales $b_n$ are defined in Section~\ref{chap4_sec:framework}). It converges to a randomly scaled scale-decorated Poisson point process; see \cite{davydov:molchanov:zuyev:2008, madaule:2011, subag:zeitouni:2014} for other applications of this point process.

As an important corollary, we derive the weak convergence of the rightmost particle in the branching random walk. The main step of the proof is based on showing that the ``heaviest'' displacement dominates the whole behaviour of our branching random walk. Then we follow the core of the idea used in Bhattacharya et al.\ \cite{bhattacharya:hazra:roy:2016}, but here we carry out their arguments with larger complexity. The proof of the main result is broken up into four basic steps. The first basic step is to show that single big jumps appearing in the particle of type $Q$ alone dominate the limiting behaviour of the point process. In the second step we cut the tree at $(n-K)^{th}$ generation. Later, in the third step, we do pruning and regularization to produce fixed $B$ many descendants of each particle and we prove that such modified point process based on this forest is close to the original point process as the truncation level $K$ and pruning range $B$ increase. In the last step we use Laplace functional convergence to prove the main result, which also allows us to identify the limiting point process.

The paper is organized as follows. Next Section introduces main definitions and notations. In Section~\ref{chap4_sec:framework} we present the model with detailed assumptions and the main results. Section~\ref{chap4_sec:sketch} outlines the proof of the main theorem, by breaking down the proof into several lemmas, which are then proved in Section \ref{chap4_sec:proofs} along with the proof of the main result.

\section{Background}\label{sec:notations}

In the following subsections, we shall recall certain concepts discussed in the remaining of the paper. In particular, the notion of regular variation on a Polish space, definition of strictly $\alpha$-stable ($\stas$) point process and basics of multi-type branching random walks.

\subsection{Regular Variation and $\stas$ Point Processes} \label{subsec:regvar_point}
We begin with defining Hult-Lindskog convergence of measures on a Polish space with a zero element. We also recall the definition of regularly varying measures on a Polish space.

Let $(\bbs,d)$ be a Polish space with a continuous scalar multiplication $\bullet: (0, \infty) \times \bbs \to \bbs$ satisfying $1 \bullet s = s$ for all $s \in \bbs$ and $b_1 \bullet (b_2 \bullet s) = (b_1 b_2) \bullet s$ for all $b_1, b_2 > 0$ and for all $s \in \bbs$. We also assume the existence of a zero element $s_0 \in \bbs$ such that  $ b \bullet s_0 = s_0 $ for all $b > 0$. Equip the punctured space $\bbs_0 = \bbs \setminus \{s_0\}$ with the relative topology coming from $\bbs$. We denote by $\bbm(\bbs_0)$ the space of all Borel measures on $\bbs_0$ which are finite outside every neighbourhood of $s_0$, and by $\calc_0$ the collection of all bounded continuous functions $f :\bbs_0 \to [0, \infty)$ that vanishes on some neighbourhood of $s_0$.
\begin{defn}[Hult-Lindskog convergence] A sequence of measures $\{\nu_n (\cdot) : n \ge 1\} \subset \bbm(\bbs_0)$ is said to converge in the Hult-Lindskog sense (see \cite{hult:lindskog:2006}) to a measure $\nu(\cdot) \in \bbm(\bbs_0)$ if
\alns{
\lim_{n \to \infty} \int_{\bbs_0} f(x) \nu_n(\dtvx) = \int_{\bbs_0} f(x) \nu(\dtvx)
}
for all $f \in \calc_0$. This convergence will be denoted by $\nu_n \hlconv \nu$.
\end{defn}

Using this notion of Hult-Lindskog convergence, we next define the regularly varying measures.
\begin{defn}[Regularly varying measure] A measure $\nu \in \bbm(\bbs_0)$ is called regularly varying if there exist $\alpha > 0$, an increasing sequence of positive scalars $\{c_n\}$ satisfying $c_{k n}/ c_n \to  k^{1/\alpha}$ for every $k \in \mathbb{N}$, and a nonzero measure $\lambda \in \bbm(\bbs_0)$ such that
\alns{
n \nu (c_n \bullet \cdot) \hlconv \lambda(\cdot)
}
as $n \to \infty$. We shall denote this by $\nu \in \regvar(\bbs_0, \alpha, \lambda)$.
\end{defn}
It is not difficult to check that the limit measure $\lambda$ satisfies the following scaling property: $\lambda( b \bullet \cdot) = b^{-\alpha} \lambda(\cdot)$ for all $b>0$.

\begin{remark}
  Hult-Lindskog convergence defined above have been named variously in the literature. We provide some instances below. Hult and Lindskog in their original work \cite{hult:lindskog:2006} called it convergence in $\boldsymbol M_0$. In \cite{das:mitra:resnick:2006}, the convergence was extended to what they called as $\mathbb M^* (\mathbb C, \mathbb O)$ convergence. Roughly speaking, they removed a close cone instead of a single point $s_0$. In a more recent work, Lindskog et al.\ \cite{lindskog:resnick:roy:2014} extended this convergence by removing a close set, rather than simply a closed cone. They called this convergence in $\mathbb{M}_{\mathbb{O}}$ or simply $\mathbb{M}_{\mathbb{O}}$-convergence. In view of the originators of the notion, we shall name the convergence after them.
\end{remark}

To illustrate the notion of regularly varying measures, we present two important examples, both of which will play important roles in this paper.
\begin{example}
In the first example, we take $\bbs = \real^\bbn = \{ \boldsymbol{x}=(x_1, x_2, \ldots) : x_i \in \real, i \ge 1\}$ and $s_0 =\boldsymbol 0_\infty := (0, 0, \ldots)$. Let $\boldsymbol{X} = (X_1, X_2, \ldots )$ be the i.i.d. process with
$\prob(X_1 \in \cdot ) \in \regvar(\zrer = [-\infty, \infty] \setminus \{0\}, \alpha, \nu_\alpha(\cdot) )$ where
\aln{
\nu_\alpha (\dtvx) =  \beta \alpha x^{-\alpha -1} \bbo(x>0) \dtvx + (1- \beta) \alpha  (-x)^{-\alpha -1} \bbo(x<0) \dtvx \label{chap4_eq:nu_alpha_explicit}
}
with $0 \le \beta \le 1$ being the tail balancing constant. If $\beta=1$, then $\nu_\alpha$ is denoted by $\malpha(\cdot)$. It was shown in \cite{lindskog:resnick:roy:2014} that $\prob (\boldsymbol{X} \in \cdot) \in \regvar(\rnz= \real^\bbn \setminus \{\boldsymbol{0}_\infty\}, \alpha, \lamiid (\cdot))$ where $\lamiid(\cdot)$ is a measure on $\rnz$ (concentrated on axes) such that
\aln{
\lamiid (\dtvx) = \sum_{i=1}^\infty \bigotimes_{j=1}^{i-1} \delta_0(\dtvx_j) \bigotimes \nu_\alpha(\dtvx_i) \bigotimes_{j=i+1}^\infty \delta_0(\dtvx_j). \label{chap4_eq:defn_lamiid}
}
Here $\delta_0$ denotes the Dirac measure putting unit mass at $0$. More generally, one can obtain other limit measures $\lambda$ (that are not necessarily concentrated on the axes) by introducing dependence among $X_1, X_2, \ldots $; see, for example, \cite{resnick:roy:2014}.
\end{example}

\begin{example}
For the second example, consider the Polish space $\bbs = \scrm(\zrer)$ of all Radon point measures on $\zrer = [-\infty, \infty] \setminus \{0\}$ endowed with vague topology (corresponding vague metric is denoted by $\distvg$). Take $s_0 = \nullm$  to be the null measure. The scalar multiplication by $b>0$ is denoted by $\mbfs_b$ and is defined as follows: if $\point = \sum_i \delta_{u_i} \in \scrm(\zrer)$ then
\alns{
\mbfs_b \point = \sum_i \delta_{b u_i}.
}
The HL convergence in $\scrmrz = \scrm(\zrer) \setminus \{ \nullm \}$ is discussed in and used by \cite{hult:samorodnitsky:2010, fasen:roy:2016} in the context of large deviation for point processes and in \cite{bhattacharya:hazra:roy:2016} in the context of branching random walk.
\end{example}

A point process on $\zrer$ is an $\scrm(\zrer)$-valued random variable defined on $(\Omega, \calf, \prob)$ that does not charge any mass to $\pm \infty$. The definition of strict stability of such point processes is introduced by \cite{davydov:molchanov:zuyev:2008} and has been shown to be connected to regular variation on $\scrmrz$ in \cite{bhattacharya:hazra:roy:2016}.
\begin{defn}[Strictly $\boldsymbol{\alpha}-stable$ point process] A point process $N$ on $\zrer$ (that does not charge any mass to $\pm \infty$) is called a strictly $\alpha$-stable ($\stas$) point process ($\alpha>0$) if for two independent copies $N_1$ and $N_2$ of $N$ and for all $b_1, b_2 > 0$ satisfying $b_1^\alpha + b_2^\alpha =1$, the superposition of the point processes $\mbfs_{b_1} N_1$ and $\mbfs_{b_2} N_2$ has the same distribution as that of $N$. 
\end{defn}

We quote the following result from \cite{bhattacharya:hazra:roy:2016}, which gives a sufficient condition for a point process to be in the superposition domain of attraction of a $\stas$ point process.

\begin{propn}[Bhattacharya et al.\ \cite{bhattacharya:hazra:roy:2016}, Theorem~2.3] \label{fact:superposition_clt}
Let $\pointl$ be a point process on $\zrer$ and $\pointl_i$'s are independent copies of $\pointl$. Suppose that there exists $\alpha >0$ and a non-null measure $m^*(\cdot) \in \bbm(\scrmrz)$ such that $\prob(\pointl \in \cdot) \in \regvar(\scrmrz, \alpha, m^*)$ with $\{c_n : n \ge 1\}$ as the scaling . Then $\pointl$ is in the superposition domain of attraction of a $\stas$ point process $\pointq$, i.e.  $\mbfs_{c_n^\inv} \sum_{i=1}^n \pointl_i \Rightarrow \pointq$. Furthermore in the above situation, Laplace functional of the limiting point process is given by
\alns{
\Psi_\pointq(f) = \exptn \Big( \exp \Big\{ - \pointq(f)\Big\}\Big)= \exp \bigg\{ - \int_{\scrmrz} \bigg( 1- \exp \Big\{ - \int f \dtv \nu \Big\} \bigg) m^*(\dtv \nu) \bigg\}
}
for all non-negative real-valued measurable function $f$.
\end{propn}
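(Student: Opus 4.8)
The plan is to pass to Laplace functionals. Recall that the class $\ckr$ is convergence determining for point processes on $\zrer$, so once the candidate limit has been exhibited as a genuine point process it suffices to prove pointwise convergence of Laplace functionals. Fix $f\in\ckr$, write $N^*_n:=\mbfs_{b_n^\inv}\sum_{i=1}^n\pointl_i$, and use independence of the copies together with $(\mbfs_b\mu)(f)=\int f(bx)\,\mu(dx)$ to get
\[
\Psi_{N^*_n}(f)=\Big(\exptn\big[e^{-(\mbfs_{b_n^\inv}\pointl)(f)}\big]\Big)^{n}=(1-a_n)^n,\qquad a_n:=\exptn\big[1-e^{-(\mbfs_{b_n^\inv}\pointl)(f)}\big].
\]
Set $\rho_n:=n\,\prob(\mbfs_{b_n^\inv}\pointl\in\cdot)$; the regular-variation hypothesis is precisely the statement $\rho_n\hlconv m^*$ in $\bbm(\scrmrz)$, and $n a_n=\int_{\scrmrz}\bigl(1-e^{-\mu(f)}\bigr)\,\rho_n(d\mu)$. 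So everything reduces to showing
\[
n a_n\;\longrightarrow\;\int_{\scrmrz}\bigl(1-e^{-\nu(f)}\bigr)\,m^*(d\nu)\;<\;\infty,
\]
after which $a_n\to 0$ and a one-line Taylor expansion give $\Psi_{N^*_n}(f)\to\exp\bigl\{-\int_{\scrmrz}(1-e^{-\nu(f)})\,m^*(d\nu)\bigr\}$.

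The one genuinely delicate point is that the integrand $\mu\mapsto 1-e^{-\mu(f)}$, though bounded and vaguely continuous (since $\mu\mapsto\mu(f)$ is vaguely continuous for $f\in\ckr$), does not vanish on any neighbourhood of $\nullm$, hence is not an admissible test function for $\hlconv$-convergence; I would handle this by a sandwich. For $\delta>0$ take a continuous nondecreasing $\chi_\delta\colon[0,\infty)\to[0,1]$ with $\chi_\delta\equiv 0$ on $[0,\delta/2]$ and $\chi_\delta\equiv 1$ on $[\delta,\infty)$, chosen nested in $\delta$, and put $T^\delta_f(\mu):=(1-e^{-\mu(f)})\,\chi_\delta(\mu(f))$; then $T^\delta_f\in\calc_0$ (it vanishes on the open neighbourhood $\{\mu:\mu(f)<\delta/2\}$ of $\nullm$), $0\le T^\delta_f\le 1-e^{-\mu(f)}$, and $T^\delta_f\uparrow 1-e^{-\mu(f)}$ as $\delta\downarrow 0$. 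From $\rho_n\hlconv m^*$ and monotone convergence in $\delta$, $\liminf_n n a_n\ge\sup_{\delta>0}\int_{\scrmrz}T^\delta_f\,dm^*=\int_{\scrmrz}(1-e^{-\nu(f)})\,m^*(d\nu)$. For the matching upper bound, write $n a_n=\int T^\delta_f\,d\rho_n+\int\bigl(1-e^{-\mu(f)}-T^\delta_f\bigr)\,d\rho_n$; the defect is supported on $\{0<\mu(f)<\delta\}$ and bounded there by $\delta$, hence is at most $\delta\cdot\rho_n(A_\delta)$ with $A_\delta:=\{\mu:0<\mu(f)\le\delta\}$. The key observation is that $A_\delta$ is bounded away from $\nullm$ for every $\delta>0$: any $\mu\in A_\delta$ has an atom in the compact set $\supp f$, and an Urysohn function $g\in\ckr$ with $g\ge 1$ on $\supp f$ then forces $\mu(g)\ge 1$, precluding vague convergence of such $\mu$ to $\nullm$. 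Hence, by the portmanteau inequality for $\hlconv$-convergence, $\limsup_n\rho_n(A_\delta)\le m^*(\overline{A_\delta})\le m^*(\overline{A_{\delta_0}})=:C<\infty$ for all $\delta\le\delta_0$. Combining, $\limsup_n n a_n\le\int T^\delta_f\,dm^*+\delta C$; since moreover $\int T^\delta_f\,dm^*\le m^*(\{\mu:\mu(f)\ge\delta/2\})<\infty$ and $\int T^\delta_f\,dm^*\uparrow\int(1-e^{-\nu(f)})\,m^*(d\nu)$, combining the two bounds and letting $\delta\downarrow 0$ pins down $\lim_n n a_n=\int_{\scrmrz}(1-e^{-\nu(f)})\,m^*(d\nu)<\infty$. (Formally this is the classical superposition limit theorem for null arrays of point processes, with $m^*$ the canonical measure; I would still run the hands-on argument so as to keep the $\hlconv$-topology explicit.)

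It remains to recognise $L(f):=\exp\bigl\{-\int_{\scrmrz}(1-e^{-\nu(f)})\,m^*(d\nu)\bigr\}$ as the Laplace functional of a $\stas$ point process. I would construct the limit directly as a Poisson cluster process: let $\sum_k\delta_{\nu_k}$ be a Poisson random measure on $\scrmrz$ with intensity $m^*$ and put $\pointq:=\sum_k\nu_k$. The finiteness $\int_{\scrmrz}(1\wedge\nu(f))\,m^*(d\nu)<\infty$ for every $f\in\ckr$ (which follows from the previous step, as $1-e^{-t}$ and $1\wedge t$ are comparable) makes $\pointq$ almost surely a Radon point measure on $\zrer$; and since $\pointl$, hence $\mbfs_{b_n^\inv}\pointl$, charges no mass to $\pm\infty$, the measure $m^*$ is concentrated on point measures not charging $\pm\infty$, so neither does $\pointq$. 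The Poisson exponential formula then gives $\exptn[e^{-\pointq(f)}]=\exp\bigl\{-\int_{\scrmrz}(1-e^{-\nu(f)})\,m^*(d\nu)\bigr\}=L(f)$, first for $f\in\ckr$ and, by monotone convergence, for all nonnegative measurable $f$; therefore $N^*_n\Rightarrow\pointq$. Finally $\pointq$ is strictly $\alpha$-stable: the scaling property $m^*(\mbfs_b\,\cdot)=b^{-\alpha}m^*(\cdot)$ makes the clusters of $\mbfs_b\pointq$ a Poisson random measure of intensity $b^\alpha m^*$, equivalently $\Psi_{\mbfs_b\pointq}(f)=L(f)^{b^\alpha}$, so for independent copies $\pointq,\pointq'$ and $b_1^\alpha+b_2^\alpha=1$, $\Psi_{\mbfs_{b_1}\pointq+\mbfs_{b_2}\pointq'}(f)=L(f)^{b_1^\alpha+b_2^\alpha}=L(f)=\Psi_\pointq(f)$, i.e.\ $\mbfs_{b_1}\pointq+\mbfs_{b_2}\pointq'\eqd\pointq$. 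The principal obstacle throughout is the truncation step in the middle paragraph --- controlling the $\limsup$ uniformly as $\delta\downarrow 0$ --- which is exactly where one must exploit that the regular-variation hypothesis governs precisely the sets bounded away from $\nullm$ and that the small-mass sets $A_\delta$ are of that kind with uniformly bounded $m^*$-mass; everything else is bookkeeping.
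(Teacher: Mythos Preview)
The paper does not actually prove this proposition: immediately after the statement it writes ``For the proof of this Proposition, we refer to \cite{bhattacharya:hazra:roy:2016}.'' So there is no in-paper argument to compare against; the result is quoted as a black box and only \emph{applied} later (in the proof of Lemma~\ref{lemma:weaklimit_multi}).

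Your argument is sound and is, in fact, essentially the standard route one would expect the cited reference to take: reduce to convergence of Laplace functionals, factor by independence to get $(1-a_n)^n$, and show $na_n\to\int(1-e^{-\nu(f)})\,m^*(d\nu)$ by approximating the test functional $\mu\mapsto 1-e^{-\mu(f)}$ from below by functions in $\calc_0$ and controlling the defect on the set $\{0<\mu(f)\le\delta\}$. The key topological point --- that $\{0<\mu(f)\le\delta\}$ is bounded away from $\nullm$ because any such $\mu$ has an atom in $\supp f$ and hence $\mu(g)\ge1$ for a suitable Urysohn $g$ --- is exactly the right observation, and your uniform-in-$\delta$ bound $m^*(\overline{A_\delta})\le m^*(\overline{A_{\delta_0}})<\infty$ closes the $\limsup$. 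The Poisson-cluster construction of $\pointq$ and the verification of strict $\alpha$-stability via $m^*(\mbfs_b\,\cdot)=b^{-\alpha}m^*(\cdot)$ are also correct. In short, you have supplied what the paper omits.
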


For the proof of this Proposition, we refer to \cite{bhattacharya:hazra:roy:2016}.  We shall exploit this Proposition for computing weak limit for a sequence of point processes.

It is established in Example 8.6 of \cite{davydov:molchanov:zuyev:2008}  that the $\stas$ point process admits a special kind of representation. A slightly general notation is introduced in \cite{bhattacharya:hazra:roy:2014} in parallel to the notation introduced in \cite{subag:zeitouni:2014}.
\begin{defn}[Randomly scaled scale-decorated Poisson point process] A point process $M$ is called a randomly scaled scale-decorated Poisson point process (SScDPPP) with intensity measure $m(\cdot)$, scale-decoration $\point$ and a positive  random scale $U$ if
\alns{
M \eqd \mbfs_U \sum_{i=1}^\infty \mbfs_{\lambda_i} \point_i,
}
where $U$ is a positive random variable, $\Lambda = \sum_{i=1}^\infty \delta_{\lambda_i}$ is a Poisson random measure on $(0, \infty)$ with intensity measure $m$ and independent of $U$, and $\{\point_i : i \ge 1\}$ is a collection of independent copies of the point process $\point$, which is also independent of $U$ and $\Lambda$.
\end{defn}
Bhattacharya et al.\ \cite{bhattacharya:hazra:roy:2014, bhattacharya:hazra:roy:2016} established that the limiting point process associated to the branching random walk with marginally regularly varying displacements admits SScDPPP representation.

\section{Framework} \label{chap4_sec:framework}

\subsection{Model} \label{chap4_subsec:model}
We shall consider multi-type branching process with $Q$ type of particles. The root is of type $p$ with probability $\pi(p)$ for $1 \le p \le Q$ where $\pmb{\pi}^t =(\pi(1), \ldots, \pi(Q))$ is a probability vector. Each particle of type $q$ produces $k_p$ children of type $p$ (for all $1 \le p \le Q$) with probability $p^{(q)}(k_1, \ldots, k_Q)$ for $k_j \in \bbn$, $1 \le j \le Q$.

Note that $p^\upq$ is supported on $\bbn^Q$, namely, the multi-type Galton-Watson tree does not have any leaf. In particular, the tree survives with probability $1$ and hence we need not condition on the survival of the tree.  Let $Z_1^{(p)}(q)$ denote the number of children of type $q$ produced by a particle of type $p$ for $p,q \in \{1, \ldots, Q\}$. When the root is of type $p$, then the number of particles is written as $\boldsymbol{Z}_1^{(p)} = (Z_1^{(p)}(1), \ldots, Z_1^{(p)}(Q))$.  The independent copies of $\boldsymbol{Z}_1^{(p)}$ will be denoted by $\{\boldsymbol{Z}_{1,i}^{(p)} : i \ge 1\}$. The vector corresponding to $n^{th}$ generation particles is denoted by $\boldsymbol{Z}_n =(Z_n(1),\ldots, Z_n(Q))$ where $Z_n(p)$ denotes the number of particles at $n^{th}$ generation of type $p$. Let $\mu_{p,q} = \exptn (Z_1^{(p)}(q))$ and $M= ((\mu_{p,q}))$. See \cite{athreya:ney:1972} for more details on multi-type branching process.

\begin{ass}[Assumptions on branching mechanism] \label{chap4_ass:ass_branching_multi}
The assumptions on the branching process are stated as follows:
\let\myenumi\theenumi
\let\mylabelenumi\labelenumi
\renewcommand{\theenumi}{B\myenumi}
\renewcommand{\labelenumi}{{\rm (\theenumi)}}
\been
\item We assume that$\mu_{p,q} < \infty$ for all $p,q \in \{1, \ldots, Q\}$. Since the tree does not have any leaf, $\mu_{p,q}\ge 1$ and hence $M$ is a positively regular matrix i.e., there exists $K$ ($K=1$ in our case) such that every element of $M^K$ is positive. Let $\rho > 0$ denote the maximal (in terms of modulus) eigenvalue of $M$ (such $\rho$ exists and is unique by Perron-Frobenius theory, \cite{karlin:taylor:1975}) is called Perron-Frobenius eigenvalue of $M$. Let $\varsigma$ and $\vartheta$ be the left and right eigenvectors corresponding to $\rho$ satisfying $\varsigma^t\boldsymbol{1} =1$ and $\varsigma^t \vartheta =1$ where $\boldsymbol{1} = (1, \ldots, 1)^t$ (existence and uniqueness of such vectors are again guaranteed by Perron-Frobenius theory). Then we have
$$ M^n= \rho^n P  + R^n \mbox{ such that } |r_{ij}^{(n)}|< \rho_0^n $$
for all $i,j = 1, \ldots, Q$ and $1 < \rho_0 < \rho$, where $P = \vartheta. \varsigma^t$ i.e. $P_{ij} = \vartheta_i \varsigma_j$ and $r_{ij}^{(n)}$ denotes the $(i,j)^{th}$ elements of $R^n$; see \cite{karlin:taylor:1975}. Since each $\mu_{pq} \ge 1$, it follows that $\rho \in (1, \infty)$ and hence the underlying multi-type Galton-Watson tree is supercritical. \label{chap4_ass:branching_frobenious}

\item (Kesten-Stigum conditions:) \label{chap4_ass:branching_Kesten_Stgum} For all $p,q \in \{1, \ldots, Q\}$, we assume
 $$\exptn \Big(Z_1^{(p)}(q) \log Z_1^{(p)}(q) \Big) < \infty.$$
\een
\end{ass}

Then from \cite{kesten:stigum:1966a} we get,
\aln{
\lim_{n \to \infty} \frac{1}{\rho^n} \boldsymbol{Z}_n = W \varsigma \label{chap4_eq:Kesten_Stigum_limit_multi}
}
almost surely for some positive random variable $W$.

We are now ready to describe the model formally in terms of the tree given in Assumptions above and the displacements of each of its vertices. We recall the notations for the position of the particles born in the branching process from Section~\ref{sec:introduction}. Let $\bbt = (\mathbb{V},\mathbb{E})$ denote the genealogical tree obtained from the supercritical multi-type Galton-Watson process where $\mathbb{V}$ and $\mathbb{E}$ denote the collection of all vertices and edges (of $\bbt$) respectively. We use the Harris-Ulam labels for the vertices of $\bbt$ as described in \cite{mode:1971}. The root is denoted by $o$. We also identify each edge with its vertex away from the root and assign its type to the edge. The displacements of the particles are described next. We assume that each particle of type $p$ produces an independent copy of $(\scrl^{(p)}(1), \scrl^{(p)}(2), \ldots, \scrl^{(p)}(Q))$, where
\aln{
\scrl^{(p)}(q) = \sum_{i=1}^{Z_1^{(p)}(q)} \delta_{X_i^{(q)}},
}
with $\bfx^{(q)} = (X^{(p)}_1, X^{(p)}_2, \ldots )$ being a random element of $\real^\bbn$ which is independent of the random vector $\boldsymbol{Z}_1^\upp$. This means the following: if a particle has $k_q (\in \bbn)$ children of type $q$ in the next generation (for all $1 \le q \le Q$), then the displacements associated to the children are independent copy of $\Big( X_1^{(1)}, \ldots, X_{k_1}^{(1)}, \ldots, X^\upQ_{1}, \ldots, X^\upQ_{k_Q} \Big)$.

Now we define the position of a particle as the sum of its own displacement and displacements of its parents up to the root $o$. For mathematical description, let $\Iv$ denote the path from the root $o$ to the vertex $\uv \in \mathbb{V}$ and $\Xu$ denote the displacement attached to the $\uu^{th}$ particle (see the convention mentioned above). Then the position of the $\uv^{th}$ particle is denoted by $\Sv$ and defined as
\alns{
\Sv = \sum_{\uu \in \Iv} \Xu.
}

\begin{ass}[Assumptions on Displacements] \label{chap4_ass:displacement_multi}
\let\myenumi\theenumi
\let\mylabelenumi\labelenumi
\renewcommand{\theenumi}{D\myenumi}
\renewcommand{\labelenumi}{{\rm (\theenumi)}}
For each $p \in \{1, \ldots, Q\}$, $ \boldsymbol{X}^\upp = (X_1^\upp, X_2^\upp, \ldots) $ is an $\real^\bbn$-valued random variable  such that the following assumptions are satisfied:
\been

\item For every fixed $p$, $\{X_i^{(p)} : i \ge 1\}$ is a collection of marginally identically distributed random variables. \label{chap4_ass:disp_iid_pp}

\item  There exists an increasing sequence of positive constants $\{b_n : n \ge 1\}$ such that\label{chap4_ass:disp_typeQ}

\been
\item (Condition on marginal distribution)
\aln{
\rho^n \prob(b_n^\inv X_1^{(Q)} \in \cdot) \vconv \nu_\alpha(\cdot), \label{chap4_eq:marginal_bn}
}
where $\nu_\alpha(\cdot)$ is a measure on $\zrer$ as defined in \eqref{chap4_eq:nu_alpha_explicit}.

\item (Condition on joint distribution of the displacements associated to the particles of type $Q$)\
\aln{
\rho^n \prob\Big( b_n^\inv \bfx^{(Q)} \in \cdot \Big)  \hlconv \lambda(\cdot), \label{chap4_eq:hlconv_bn}
}
where $\lambda$ is a measure on $\rnz = \real^\bbn \setminus \{0_\infty\}$, such that $\lambda(a \cdot) = a^{-\alpha} \lambda(\cdot)$ for every $a > 0$, and $\lambda(A) < \infty$ if $0_\infty\notin \bar{A}$.
\een

\item For all $1 \le p \le Q-1$, there exists $\gamma >0$, such that \label{chap4_ass:disp_otherthanQ}
\aln{
\prob(|X^{(p)}_1|>x) \le \varepsilon(x) \prob(|X_1^{(Q)}| > x)
}
such that $\varepsilon : \real_+ \to \real_+ $ is a map satisfying,
\aln{
\lim_{t \to \infty}\dfrac{\varepsilon(tx)}{\varepsilon(t)} = x^{-\gamma}.\label{varepsilon}
}
for all $x>0$.
\een
\end{ass}

Let $|\uv|$ denote the generation of the vertex $\uv \in \mathbb{V}$ in the genealogical tree $\bbt$. In this article, we shall obtain the weak limit of the point process,
\aln{
N_n = \sum_{|\uv| =n} \delta_{b_n^{\inv} \Sv}. \label{chap4_eq:defn_Nn_multi}
}

\subsection{Results}\label{chap4_subsec:mainresults}
We need to introduce the following quantities to describe the limiting point process.

 Let $G$ be a positive integer-valued random variable with probability mass function
\aln{
\prob(G=g) = \sum_{q=1}^Q \varsigma_q \prob(U_1^{(q)}(Q) = g), ~~~~~~~~ g \in \bbn,
}
where $\{U_1^{(q)}, q=1,\ldots,Q\}$ is a collection of independent random variables such that for each $q$, $U_1^\upq \eqd Z_1^\upq(Q)$.

We denote by $(Z^{(p)}_{m} (1), \ldots, Z^{(p)}_{m}(Q))$ the random vector of the number of particles at the $m^{th}$ generation for different types when root is of type $p$ with probability one.
Now consider a positive integer sequence-valued random variable (stochastic process) $\boldsymbol{T} = \{T_i: i \ge 1 \}$ such that for every fixed $i \in \bbn$,
\aln{
\prob \Big((T_1, \ldots, T_i) = (t_1, \ldots, t_i)\Big) =(\rho-1) \sum_{m=0}^\infty \frac{1}{\rho^{m+1}} \prod_{j=1}^i \prob\Big( \sum_{p=1}^Q Z_m^\upQ (p) = t_j \Big).
}
Let
\alns{
\poi = \sum_{l=1}^\infty \delta_{\pmb{\xi}_l} = \sum_{l=1}^\infty \delta_{(\xi_{l1}, \xi_{l2}, \ldots)}
}
be a Poisson random measure (PRM) on $\real^{\bbn} \setminus \{ \boldsymbol{0}\}$ with intensity measure $\lambda(\cdot)$ which is described in \eqref{chap4_eq:hlconv_bn}. We assume that $\poi$ is independent of $W$.

Let $\{G_l : l \ge 1\}$ be i.i.d. copies of the random variable $G$ and is also independent of $W$ and $\poi$.  Consider the collection $\{\boldsymbol{T}_l = (T_{l1}, T_{l2}, T_{l3}, \ldots): l \ge 1\}$ consisting of independent copies of $\boldsymbol{T}$ and assume further that this collection is also independent of  $\{G_l: l \ge 1\}$, $W$ and $\poi$.

\begin{thm} \label{thm:main_thm_multi}
Suppose Assumptions \ref{chap4_ass:ass_branching_multi} and \ref{chap4_ass:displacement_multi} hold. Consider the point process sequence $N_n$ defined by \eqref{chap4_eq:defn_Nn_multi} with $b_n$ described in \eqref{chap4_eq:marginal_bn} and \eqref{chap4_eq:hlconv_bn}. Then $N_n$ converges weakly as $n \to \infty$ to the Cox cluster process
\aln{
N_* = \sum_{l=1}^\infty \sum_{k=1}^{G_l} T_{lk} \delta_{((\rho-1)^\inv W)^{1/ \alpha} \xi_{lk}} \label{chap4_eq:limiting_pp_multi}
}
in the space $\scrm(\rnz)$ equipped with vague topology. Moreover, the limiting point process $N_*$ is a randomly scaled scale-decorated Poisson point process (SScDPPP) as defined in Subsection \ref{subsec:regvar_point}.
\end{thm}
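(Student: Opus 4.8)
The plan is to follow the four-step scheme outlined in the introduction, reducing the multi-type branching random walk to the single-type situation treated in \cite{bhattacharya:hazra:roy:2016} and then invoking Propositions from Section~\ref{subsec:regvar_point} for the final identification. First I would establish a \emph{single-big-jump} reduction: since $\bfx^{(Q)}$ has regularly varying tails of index $\alpha$ and, by Assumption~\ref{chap4_ass:disp_otherthanQ}, the displacements of types $1,\ldots,Q-1$ are asymptotically negligible (their tails are of order $\varepsilon(x)\prob(|X_1^{(Q)}|>x)$ with $\varepsilon(t)\to 0$), the only way a particle in generation $n$ can reach scale $b_n$ is through a single exceptional displacement attached to a type-$Q$ edge somewhere along its ancestral line. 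Concretely, I would write $\Sv=\sum_{\uu\in\Iv}\Xu$, note that the paths $\Iv$ have length $n$, and use a union bound together with \eqref{chap4_eq:marginal_bn}, \eqref{chap4_eq:hlconv_bn}, \eqref{varepsilon} and the Kesten--Stigum normalization \eqref{chap4_eq:Kesten_Stigum_limit_multi} to show that, on the event that $N_n(f)\neq 0$ for a test function $f\in C_c^+(\rnz)$ vanishing near the origin, there is (asymptotically) exactly one such jump per contributing cluster, and the other displacements on the path contribute $o(b_n)$. This is the step I expect to be the \textbf{main obstacle}, because the dependence inside the vector $\bfx^{(Q)}$ (governed by $\lambda$, not necessarily concentrated on axes) means a single ``big jump'' at one vertex can simultaneously displace several of its children, and one must carefully bookkeep which descendants inherit that jump; controlling the lighter-tailed types uniformly over the $\sim\rho^n$ vertices, with the right rate supplied by $\gamma>0$, is also delicate.

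Second, I would perform the \emph{truncation at generation $n-K$}: cut the tree at level $n-K$, so that the position of a generation-$n$ particle splits as (position of its ancestor at level $n-K$) plus (a displacement accumulated over the last $K$ generations in the subtree rooted at that ancestor). The big jump, when it occurs in the last $K$ generations, together with the subtree structure below it, produces a cluster; when it occurs before level $n-K$, the contribution is shown to vanish as $K\to\infty$ uniformly in $n$, using again the tail balance and Kesten--Stigum. This is where the cluster sizes $G_l$ and the multiplicities $T_{lk}$ emerge: $G$ counts the number of type-$Q$ children of the vertex carrying the big jump (hence the mixture over $\varsigma_q\,\prob(U_1^{(q)}(Q)=g)$ reflecting the type distribution of that vertex, biased by the stationary left eigenvector), and $T_{lk}$ counts how the jump propagates through $Z_m^{(Q)}(\cdot)$ particles over the remaining generations, explaining the size-biased geometric-type law $(\rho-1)\sum_m\rho^{-(m+1)}\prod_j\prob(\sum_p Z_m^{(Q)}(p)=t_j)$ with the $(\rho-1)/\rho^{m+1}$ factor being exactly the asymptotic fraction of the tree at ``depth $m$ from the top.''

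Third, I would do the \emph{pruning and regularization}: replace the (random, possibly large) offspring numbers by a fixed bounded branching number $B$ on each of the last $K$ levels, obtaining a forest-based point process $\nkb$, and prove $\nkb\to N_n$ in the appropriate sense as $K,B\to\infty$; the error is controlled by a truncation-of-sums argument together with the finiteness of $\lambda$ away from the origin. At this stage the modified point process is a finite superposition, over the roots of the pruned forest, of i.i.d. ``cluster'' point processes, each built from a regularly varying random element $\bfx^{(Q)}$ decorated by the bounded subtree; by Assumption~\ref{chap4_ass:disp_typeQ}(b) and the fact that the number of such roots grows like $\rho^{n-K}\sim (\rho-1)^{-1}W\rho^n$ (so the per-term probability scales like $b_n$), each cluster process is regularly varying on $\scrmrz$ in the sense of Section~\ref{subsec:regvar_point}, with a limit measure $m^*$ computable from $\lambda$, $G$ and $\boldsymbol T$.

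Finally, the \emph{Laplace-functional step}: apply Proposition~\ref{fact:superposition_clt} to the superposition of the i.i.d.\ cluster processes (conditionally on $W$, which contributes the random scale $((\rho-1)^{-1}W)^{1/\alpha}$ via the scaling property $\lambda(a\cdot)=a^{-\alpha}\lambda(\cdot)$), to get convergence to a $\stas$ point process whose Laplace functional is $\exp\{-\int_{\scrmrz}(1-e^{-\int f\,d\nu})\,m^*(d\nu)\}$. Then I would verify that this Laplace functional coincides with that of $N_*$ in \eqref{chap4_eq:limiting_pp_multi} by a direct computation: conditioning on $W$ and $\poi$, the points $\pmb\xi_l$ of the PRM with intensity $\lambda$ are each decorated by $\sum_{k=1}^{G_l}T_{lk}\delta_{\xi_{lk}}$ and then globally scaled, which is precisely the SScDPPP structure (random scale $U=((\rho-1)^{-1}W)^{1/\alpha}$, intensity $\lambda$, scale-decoration the law of $\sum_{k=1}^{G}T_k\delta_{(\cdot)}$ suitably interpreted). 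Matching the two Laplace functionals, together with the fact that a point process on $\zrer$ is determined by its Laplace functional on $C_c^+$, gives both the weak convergence $N_n\Rightarrow N_*$ and the asserted SScDPPP representation; the $\stas$ property then follows from Proposition~\ref{fact:superposition_clt} and the representation in Example~8.6 of \cite{davydov:molchanov:zuyev:2008} as recalled before the definition of SScDPPP.
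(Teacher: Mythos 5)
Your plan follows essentially the same route as the paper: the one-large-jump reduction to type-$Q$ displacements along ancestral lines (the paper's Lemma \ref{chap4_lemma:one_large_jump}, proved via the Karamata/Potter bound on $\varepsilon(b_n)$), cutting at generation $n-K$, pruning and regularization to a bounded-branching forest, and then the superposition theorem (Proposition \ref{fact:superposition_clt}) plus a Laplace-functional computation conditioned on $W$ to identify the Cox cluster limit $N_*$ and its SScDPPP structure, with $G$ and $\boldsymbol{T}$ arising exactly as you describe. One small imprecision worth fixing: the number of roots after cutting satisfies $|D_{n-K}|\approx W\rho^{n-K}$ (not $(\rho-1)^{-1}W\rho^{n}$); the factor $(\rho-1)^{-1}$ emerges only in the limit $K\to\infty$ from summing $\rho^{-(m+1)}$ over the depth $m$ of the big jump below the cut, as in the paper's passage from $N_*^{(K)}$ to $N_*$.
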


Now we shall discuss the consequences of Theorem \ref{thm:main_thm_multi}. Let $M_n = \max_{|\uv| = n} \Sv$ denote the position of the rightmost particle at the $n^{th}$ generation. Then the following result gives is the multi-type extension of the main result of \cite{durrett:1983}. We need the following notations to write down the asymptotic distribution of $M_n$ after scaling by $b_n$.

Let $G_0 = [-\infty, 1)$ and $G_1 = (1, \infty ]$. Assume that for every fixed $g$, $H_{i_1, \ldots, i_g} = G_{i_1} \times G_{i_2} \times \ldots \times G_{i_g} \times \real \times \ldots \times \real$ for $i_j \in \{0,1\}$ and $1 \le j \le g$. It is important to note that $\{ H_{i_1, \ldots, i_g} : i_j \in \{0,1\} \mbox{ and } 1 \le j \le g\}$ is a disjoint collection of sets.

The proofs of the following corollaries are similar to the proofs in Section~5 in \cite{bhattacharya:hazra:roy:2016}.

\begin{cor} \label{cor:maxima}
Under the assumptions of Theorem \ref{thm:main_thm_multi},
\aln{
\lim_{n \to \infty} \prob(M_n < b_n x) = \exptn \bigg[  \exp \Big\{ - \kappa_\lambda W x^{-\alpha}\Big\}\bigg],
}
where
\begin{equation}\label{chap4_eq:kappa_lambda_multi}
\kappa_\lambda=(\rho-1)^\inv \sum_{g=1}^\infty \prob(G=g) \sum_{\stackrel{i_1, \ldots, i_g \in \{0,1\}^g :}{ 1 \le i_1 + \ldots+ i_g \le g}} \lambda(H_{i_1, \ldots, i_g}).
\end{equation}
\end{cor}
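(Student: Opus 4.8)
The plan is to derive Corollary~\ref{cor:maxima} directly from the weak convergence $N_n \Rightarrow N_*$ established in Theorem~\ref{thm:main_thm_multi}. First I would observe that the event $\{M_n < b_n x\}$ can be written in terms of the point process $N_n$: for $x > 0$, we have $\{M_n < b_n x\} = \{ N_n((x, \infty]) = 0 \}$, i.e.\ no point of $N_n$ lies to the right of $x$. Since $(x,\infty]$ has boundary $\{x\}$ which is a continuity point (the limiting intensity has no atoms away from the axes issues — more precisely $\lambda$ scales continuously so $\prob(N_*(\{x\}) > 0) = 0$ for fixed $x > 0$), the Portmanteau theorem for weak convergence of point processes on $\scrm(\rnz)$ gives $\prob(M_n < b_n x) = \prob(N_n((x,\infty]) = 0) \to \prob(N_*((x,\infty]) = 0)$.

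Next I would compute $\prob(N_*((x,\infty]) = 0)$ using the explicit representation \eqref{chap4_eq:limiting_pp_multi}. Conditioning on $W$, the atoms of $N_*$ to the right of $x$ come from those Poisson points $\pmb{\xi}_l$ whose rescaled coordinates $((\rho-1)^\inv W)^{1/\alpha} \xi_{lk}$ exceed $x$ for some $k \le G_l$, and each such contributes a multiplicity $T_{lk} \ge 1$. Hence $N_*((x,\infty]) = 0$ iff for every $l$, none of the first $G_l$ coordinates of $\pmb{\xi}_l$, after scaling, exceeds $x$. Using the fact that $\poi = \sum_l \delta_{\pmb{\xi}_l}$ is a PRM with intensity $\lambda$, independent of $W$ and the $G_l$'s and $\boldsymbol{T}_l$'s, and recalling $G_l \eqd G$ with the given pmf, a standard marking/thinning argument for Poisson processes shows that the conditional (given $W$) probability that $N_*$ has no point in $(x,\infty]$ equals $\exp\{-\Lambda_x\}$ where $\Lambda_x$ is the $\lambda$-mass of the set of configurations $(g, \boldsymbol{\xi})$ that \emph{do} produce such a point, averaged over $g \sim G$. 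Concretely, scaling $\lambda(a\,\cdot) = a^{-\alpha}\lambda(\cdot)$ with $a = ((\rho-1)^\inv W)^{1/\alpha}$ pulls out a factor $(\rho-1)^\inv W x^{-\alpha}$, and partitioning $\real^{\bbn}$ according to which of the first $g$ coordinates fall in $G_0 = [-\infty,1)$ versus $G_1 = (1,\infty]$ (the sets $H_{i_1,\ldots,i_g}$) and excluding the all-$G_0$ case (which contributes no point above the threshold) yields exactly $\Lambda_x = \kappa_\lambda W x^{-\alpha}$ with $\kappa_\lambda$ as in \eqref{chap4_eq:kappa_lambda_multi}. Taking expectation over $W$ gives the claimed formula.

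I expect the main technical point — though the excerpt already flags that the proof parallels Section~5 of \cite{bhattacharya:hazra:roy:2016} — to be the careful justification that $(x,\infty]$ is an $N_*$-continuity set and that one may pass from vague convergence on $\scrm(\rnz)$ to the convergence of the void probability $\prob(N_n((x,\infty]) = 0)$; one must check that the set $(x,\infty]$ (as a subset of $\rnz$, i.e.\ the first-coordinate half-line) is relatively compact in $\rnz$ away from $\boldsymbol{0}_\infty$ so that vague convergence applies, and that the multiplicities $T_{lk}$, being finite a.s., do not disturb the void-probability computation. The remaining ingredient is the Poisson superposition/thinning identity, which is routine: conditionally on $W$ and on the marks $(G_l, \boldsymbol{T}_l)$, the points of $N_*$ restricted to $(x,\infty]$ form a Poisson process whose intensity integrates to $\Lambda_x$, so its void probability is $e^{-\Lambda_x}$, and averaging over the marks and then over $W$ is immediate by independence. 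I would present the argument in this order: (i) reduce to a void probability, (ii) apply Theorem~\ref{thm:main_thm_multi} plus Portmanteau, (iii) compute the void probability of $N_*$ via the SScDPPP/Poisson structure and the scaling property of $\lambda$, (iv) identify the constant $\kappa_\lambda$.
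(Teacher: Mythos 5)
Your proposal is correct and follows essentially the same route the paper intends: the paper simply defers to Section~5 of \cite{bhattacharya:hazra:roy:2016}, where the corollary is obtained exactly as you describe, by rewriting $\{M_n < b_n x\}$ as a void probability of $N_n$, invoking the point-process convergence of Theorem~\ref{thm:main_thm_multi}, and computing the void probability of the limiting Cox cluster process via the marked-Poisson structure and the scaling property $\lambda(a\,\cdot)=a^{-\alpha}\lambda(\cdot)$, which yields $\kappa_\lambda$ as in \eqref{chap4_eq:kappa_lambda_multi}. Your treatment of the continuity issues (atomlessness of the coordinate projections of $\lambda$, hence negligibility of the boundary $\{x\}$ and of the sets where a coordinate equals $1$) is the right justification, so no gap remains.
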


Finally we consider two special cases, where an explicit SScDPPP representation can be obtained; see the corollary and remark below.

\begin{cor} \label{cor:iid}
Under the assumptions of Theorem \ref{thm:main_thm_multi} with $\boldsymbol{X}^\upQ$ as an i.i.d. process, the sequence of point processes converge to $N_{iid} \sim SScDPPP(m_\alpha(\cdot), H \delta_{\chi}, ((\rho-1)^\inv \varsigma_Q W)^{1/\alpha})$ where $\chi$ is a $\{\pm 1\}$-valued random variable with $\prob(\chi=1)=p$ and $H$ is a positive integer-valued random variable with probability mass function
\alns{
\prob(H=y) = (\rho-1) \sum_{m=0}^\infty \frac{1}{\rho^{m+1}} \prob \bigg( \sum_{p=1}^Q Z_m^\upQ(p) =y \bigg).
}
\end{cor}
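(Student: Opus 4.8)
The plan is to derive Corollary~\ref{cor:iid} as a specialization of Theorem~\ref{thm:main_thm_multi} by exploiting the structure of the limit measure $\lambda$ when $\bfx^{(Q)}$ is an i.i.d.\ process. First I would recall from the first Example in Subsection~\ref{subsec:regvar_point} that if $\prob(X_1^{(Q)} \in \cdot) \in \regvar(\zrer, \alpha, \nu_\alpha)$ with $\nu_\alpha = u_Q\, \nu_\alpha^{(1)}$ for a suitable constant $u_Q$ (so that $\nu_\alpha = u_Q(\beta\, \malpha + (1-\beta)\check m_\alpha)$ in the notation of \eqref{chap4_eq:nu_alpha_explicit}), then the joint limit measure is $\lambda = \lamiid$ as in \eqref{chap4_eq:defn_lamiid}, concentrated on the coordinate axes. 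In particular, the Poisson random measure $\poi = \sum_l \delta_{\pmb{\xi}_l}$ on $\rnz$ with intensity $\lamiid$ can be realized as $\poi \eqd \sum_l \delta_{R_l \varsigma_l \bfe_{J_l}}$, where $\bfe_j$ is the $j$-th coordinate vector, $\{(R_l, \varsigma_l, J_l)\}$ is such that $\sum_l \delta_{R_l}$ is a PRM on $(0,\infty)$ with intensity $u_Q\,\alpha r^{-\alpha-1}\,dr$ (i.e.\ the intensity of $\mbfs_{(u_Q)^{1/\alpha}}$ applied to a standard PRM with intensity $m_\alpha$), each $\varsigma_l$ is a $\{\pm1\}$-valued sign with $\prob(\varsigma_l=1)=p:=\beta$, and $J_l$ is the (a.s.\ unique) index of the nonzero coordinate. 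The point being: each atom $\pmb{\xi}_l$ of $\poi$ has exactly one nonzero entry.

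Next I would substitute this representation into the cluster process \eqref{chap4_eq:limiting_pp_multi}. Since $\pmb{\xi}_l = R_l\varsigma_l \bfe_{J_l}$ has a single nonzero coordinate in position $J_l$, the inner double sum $\sum_{k=1}^{G_l} T_{lk}\,\delta_{((\rho-1)^\inv W)^{1/\alpha}\xi_{lk}}$ collapses: all coordinates $\xi_{lk}$ with $k \neq J_l$ equal $0$ and contribute atoms at $0$, which are discarded in $\scrm(\rnz)$, so only the term $k=J_l$ survives, contributing the atom $T_{l J_l}\,\delta_{((\rho-1)^\inv W)^{1/\alpha} R_l \varsigma_l}$ with multiplicity $T_{l J_l}$. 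Conditionally on $G_l=g$, the index $J_l$ is uniform on $\{1,\dots,g\}$ (by the axis-symmetry of $\lamiid$ and independence of $\poi$ from everything else), and $T_{l J_l}$ is then distributed as a single coordinate $T_1$ of the process $\boldsymbol{T}$. The key computation is to check that the resulting multiplicity random variable — call it $H$ — has exactly the claimed probability mass function $\prob(H=y) = (\rho-1)\sum_{m=0}^\infty \rho^{-(m+1)}\prob(\sum_{p=1}^Q Z_m^{(Q)}(p) = y)$; this follows by reading off the marginal law of $T_1$ from the displayed formula for $\prob((T_1,\dots,T_i)=(t_1,\dots,t_i))$ with $i=1$, which is precisely $(\rho-1)\sum_m \rho^{-(m+1)}\prob(\sum_p Z_m^{(Q)}(p)=t_1)$. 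Here the mixing over root-type $q$ encoded in $G$ drops out because the relevant quantity $\sum_p Z_m^{(Q)}(p)$ only depends on the total progeny, not on which coordinate of the cluster we landed on.

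Having reorganized $N_*$ as $\sum_l H_l\, \delta_{((\rho-1)^\inv W)^{1/\alpha} R_l \varsigma_l}$ with $\{R_l\}$ a PRM of intensity $u_Q m_\alpha$ (equivalently, $\mbfs_{(u_Q)^{1/\alpha}}$ of a PRM with intensity $m_\alpha$), $\{\varsigma_l\}$ i.i.d.\ signs, $\{H_l\}$ i.i.d.\ copies of $H$, and $W$ independent of all of these, I would match this against the $SScDPPP$ definition. Writing the random scale as $U = ((\rho-1)^\inv u_Q W)^{1/\alpha}$ and absorbing $(u_Q)^{1/\alpha}$ into it, the point process becomes $\mbfs_U \sum_l \mbfs_{\lambda_l}(H_l \delta_{\varsigma_l})$ where $\sum_l \delta_{\lambda_l}$ is a PRM on $(0,\infty)$ with intensity $m_\alpha$; since $H_l \delta_{\varsigma_l}$ is exactly an independent copy of the scale-decoration $H\delta_{\varsigma}$, this is by definition $SScDPPP(m_\alpha(\cdot),\, H\delta_\varsigma,\, ((\rho-1)^\inv u_Q W)^{1/\alpha})$, as asserted.

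The main obstacle I anticipate is the bookkeeping in the second step: one must be careful that after discarding the atoms at $0\in\rnz^c$, the joint law of (the surviving location $R_l\varsigma_l$, its multiplicity $T_{lJ_l}$) is correctly identified, including verifying that the uniform choice of the surviving index $J_l$ given $G_l=g$ together with the exchangeability structure of $\boldsymbol{T}$ really does yield the marginal $T_1$-law rather than some size-biased or otherwise reweighted version, and that the constant $u_Q$ is tracked consistently between the marginal normalization \eqref{chap4_eq:marginal_bn} and the joint normalization \eqref{chap4_eq:hlconv_bn}. Everything else — the collapse of the inner sum, independence from $W$, and the final pattern-matching with the $SScDPPP$ definition — is routine once this identification is pinned down.
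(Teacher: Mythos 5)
Your overall strategy (specialize the limit measure to $\lamiid$, collapse each cluster of \eqref{chap4_eq:limiting_pp_multi} to a single atom, identify the multiplicity law as the marginal of $T_1$, and pattern-match with the SScDPPP definition) is the right one and is essentially what the paper intends by deferring to Section~5 of \cite{bhattacharya:hazra:roy:2016}. However, the pathwise bookkeeping in your second step contains a genuine error. First, the representation $\poi \eqd \sum_l \delta_{R_l \varsigma_l \bfe_{J_l}}$ with $\sum_l \delta_{R_l}$ a PRM on $(0,\infty)$ of intensity proportional to $m_\alpha$ cannot hold: under $\lamiid$ every coordinate axis carries an independent PRM with intensity $\nu_\alpha$, so the radial parts aggregated over all axes have \emph{infinite} intensity on $\{r>\epsilon\}$ and do not form a locally finite point process. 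Second, the claim that conditionally on $G_l=g$ the axis index $J_l$ is uniform on $\{1,\ldots,g\}$ is false: $G_l$ is independent of $\poi$, so conditioning on it does not constrain $J_l$ at all, and $J_l$ is unbounded. The inner sum in \eqref{chap4_eq:limiting_pp_multi} runs only over $k\le G_l$, so a cluster whose single nonzero coordinate sits at position $J_l>G_l$ contributes \emph{nothing}; this is a thinning, with retention probability $\prob(G\ge j)$ for an atom on axis $j$. It is exactly this thinning that makes the limit locally finite, and aggregating it over axes multiplies the location intensity by $\sum_{j\ge1}\prob(G\ge j)=\exptn[G]=\sum_{q}\varsigma_q\,\exptn Z_1^\upq(Q)=\rho\,\varsigma_Q$. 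Your argument, in which every cluster survives with exactly one atom, loses this factor entirely.

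This matters for the constant in the random scale. Under the paper's normalization, \eqref{chap4_eq:marginal_bn} together with \eqref{chap4_eq:nu_alpha_explicit} fixes the marginal limit to be $\nu_\alpha$ itself, so the constant you call $u_Q$ (a normalization of the marginal measure) equals $1$ and your derivation would output the scale $((\rho-1)^\inv W)^{1/\alpha}$, which is off by $(\exptn[G])^{1/\alpha}$. The factor multiplying $(\rho-1)^\inv W$ must be $\exptn[G]=\rho\varsigma_Q$ (so this is what the corollary's $u_Q$ has to denote), and it comes from the thinning just described, not from the marginal normalization. The remaining identification is fine: for a retained atom on axis $j$ the multiplicity is $T_{lj}$, and since $\boldsymbol{T}_l$ is exchangeable and independent of $G_l$, conditioning on $\{G_l\ge j\}$ does not affect it, so its law is that of $T_1$, i.e.\ of $H$. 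A cleaner way to avoid the trap altogether is the Laplace-functional route used in the cited reference: insert $\lambda=\lamiid$ into the Laplace functional \eqref{chap4_eq:limit_laplacetransform} (equivalently, into $\exptn[\exp\{-(\rho-1)^\inv W\int \exptn(1-e^{-\sum_{k=1}^G T_k f'(x_k)})\lambda(\dtv\boldsymbol{x})\}]$), use $f'(0)=0$ to reduce the integral to $\exptn[G]\int\exptn(1-e^{-Hf'(x)})\nu_\alpha(\dtvx)$, and match this with the Laplace functional of $SScDPPP(m_\alpha,\,H\delta_\varsigma,\,((\rho-1)^\inv\exptn[G]\,W)^{1/\alpha})$ with $\prob(\varsigma=1)=\beta$.
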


\begin{remark}
Assume that $Z^\upp_1 (Q) \le B $ for every $1 \le p \le Q$. In this case, following exactly the same lines as in Corollary 5.3
in \cite{bhattacharya:hazra:roy:2016}, it is easy to obtain explicit  SScDPPP representation of the limiting point process even when $X_1^\upQ, X_2^\upQ, \ldots$ are not independent.
\end{remark}

\section{Sketch of Proof of the Theorem \ref{thm:main_thm_multi}}\label{chap4_sec:sketch}

Recall that $\Iv$ denotes the geodesic path from the root $o$ to the vertex $\uv$. Let $\Iv^\upp$ denote the collection of all edges of type $p$ (see the convention mentioned in Section \ref{chap4_sec:framework}) on the path $\Iv$. Let $D_n : = \{\uv \in \mathbb{V}: |\uv| = n\}$ denote the collection of all the $n^{th}$ generation vertices of $\bbt$ and $D_n^\upp$ is the collection of all vertices in $D_n$ of type $p$. By $|D_n|$, we denote the cardinality of the random set $D_n$. At the beginning, note that on a fixed path, the displacements are independent. Another important observation is that  on a fixed path, the contributions of the $p^{th}$ type of particles are negligible with high probability for sufficiently large $n$ for each $p=1, \ldots, Q-1$. The reason is that the displacements corresponding to them has lighter tails than those of the $Q^{th}$ type of particles. Final observation concerns the fact that on a fixed path at most one of the $Q^{th}$ type of particle is large which is a consequence of the well known ``one large jump" principle for sum of i.i.d. random variables with regularly varying tails. Define
\aln{
\tilde{N}_n = \sum_{|\uv|=n} \sum_{\uu \in \Iv^{(Q)}} \delta_{b_n^\inv \Xu}.
}
To formalize all above observations it is enough to show that for sufficiently large $n$
the sequences of point processes $N_n$ and $\tilde{N}_n$ are close in vague topology.
\begin{lemma} \label{chap4_lemma:one_large_jump}
With the notations defined above,
\aln{
\limsup_{n\to \infty} \prob \Big( \distvg(N_n, \tilde{N}_n) > \epsilon \Big) =0 \label{chap4_eq:aim_one_large_jump_multi}
}
for every $\epsilon >0$.
\end{lemma}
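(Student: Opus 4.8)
The plan is to decompose the discrepancy $N_n - \tilde N_n$ into two contributions and show each is negligible in vague distance. On any fixed path $\Iv$ we have $\Sv = \sum_{\uu \in \Iv^{(Q)}} \Xu + \sum_{p=1}^{Q-1} \sum_{\uu \in \Iv^{(p)}} \Xu$. So the position $b_n^{-1}\Sv$ differs from the points $b_n^{-1}\Xu$ appearing in $\tilde N_n$ because (a) the light-tailed types $p < Q$ contribute an extra additive term, and (b) even among the type-$Q$ edges on a path, $\Sv$ aggregates all of them rather than recording each separately, so $N_n$ places a single point at the sum while $\tilde N_n$ places one point per type-$Q$ edge. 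The strategy, exactly as in \cite{bhattacharya:hazra:roy:2016} but with the extra bookkeeping of $Q$ types, is: \emph{(i)} control the total number of particles and paths via the Kesten-Stigum limit \eqref{chap4_eq:Kesten_Stigum_limit_multi}, so that with high probability $|D_n| \le C\rho^n$ for a large constant $C$; \emph{(ii)} show that, uniformly over the $O(\rho^n)$ paths, the light-type contribution $\sum_{p<Q}\sum_{\uu\in\Iv^{(p)}}|\Xu|$ is $o(b_n)$ with probability tending to one, using Assumption~\ref{chap4_ass:disp_otherthanQ} (the tail domination with $\varepsilon(tx)/\varepsilon(t)\to x^{-\gamma}$) together with a union bound and a Potter-type bound on $b_n$ (which is regularly varying of index $1/\alpha$ since $c_{kn}/c_n \to k^{1/\alpha}$); \emph{(iii)} show that on each path at most one type-$Q$ displacement exceeds level $\eta b_n$ (for small $\eta>0$), again by a union bound over $O(\rho^n)$ paths, each path carrying $O(1)$ type-$Q$ edges in expectation, invoking \eqref{chap4_eq:marginal_bn} and the standard ``one large jump'' estimate $\prob(\exists\, i\ne j \le m:\ |X_i^{(Q)}|\wedge|X_j^{(Q)}| > \eta b_n) = o(\rho^{-n})$ for sums of i.i.d.\ regularly varying summands.

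Given these three events, I would argue as follows. Fix a test function $\phi \in C_c^+(\rnz)$ and $\epsilon>0$; it suffices to bound $|N_n(\phi) - \tilde N_n(\phi)|$. Any point of $\tilde N_n$ at level $b_n^{-1}\Xu$ with $|\Xu| \le \eta b_n$ lies in a fixed compact neighbourhood of $0$ and so is killed by $\phi$ once $\eta$ is small relative to $\supp\phi$; the same is true of the corresponding point of $N_n$, because on the good event the remaining summands along that path are all $\le \eta b_n$ in absolute value too, so $b_n^{-1}|\Sv|$ is at most a bounded multiple of $\eta$. For a path carrying exactly one ``big'' type-$Q$ jump $\Xu[*]$ with $|\Xu[*]| > \eta b_n$, we have $b_n^{-1}\Sv = b_n^{-1}\Xu[*] + b_n^{-1}(\text{rest})$ where the remainder is $o(1)$ uniformly on the good event; hence $b_n^{-1}\Sv$ and $b_n^{-1}\Xu[*]$ are within $o(1)$ of each other, and by uniform continuity of $\phi$ the difference in their $\phi$-values is $o(1)$ per such path. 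Multiplying by the $O(\rho^n)$ bound on the number of contributing paths does not immediately give $o(1)$, so the quantitative statement must instead be run the other way: show that the number of paths carrying a big jump is itself $O_p(1)$ (it converges to the limiting cluster count), and on each of these finitely many paths the perturbation is $o(1)$. This is the standard route and it closes the estimate, giving $\distvg(N_n,\tilde N_n) \to 0$ in probability.

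The main obstacle, and the place where the multi-type structure genuinely costs something beyond \cite{bhattacharya:hazra:roy:2016}, is step \emph{(ii)}: controlling the accumulated light-tailed displacements along a path uniformly over exponentially many paths. Along a single path of length $n$ the number of type-$p$ edges is random and can be as large as $n$; the sum $\sum_{\uu\in\Iv^{(p)}}|\Xu|$ of $n$ i.i.d.\ copies of $|X_1^{(p)}|$ must be shown to be $o(b_n)$ with probability $1 - o(\rho^{-n})$ so that a union bound over $\Theta(\rho^n)$ paths survives. Since $|X_1^{(p)}|$ has a tail dominated by $\varepsilon(x)\prob(|X_1^{(Q)}|>x)$ with $\varepsilon(x)\to 0$ regularly varying of index $-\gamma$, one truncates at level $\delta b_n$: the probability that any of the (at most $n$ along a path, at most $n\rho^n$ overall) type-$p$ displacements exceeds $\delta b_n$ is $n\rho^n \cdot \varepsilon(\delta b_n)\rho^{-n}(1+o(1)) = n\,\varepsilon(\delta b_n)(1+o(1)) \to 0$ provided $n\,\varepsilon(b_n)\to 0$, which must be extracted from the assumptions (it follows because $\varepsilon$ decays like a negative power and $b_n$ grows essentially geometrically, so $\varepsilon(b_n)$ is exponentially small, swamping the polynomial factor $n$); and the truncated sum along a path has mean $O(n\,\exptn[|X_1^{(p)}|\bbo(|X_1^{(p)}|\le \delta b_n)])$, which one checks is $o(b_n)$, with deviations controlled by a Markov or Bernstein bound at the scale $b_n$. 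Assembling these pieces carefully — in particular verifying $n\,\varepsilon(b_n)\to 0$ and the truncated-mean estimate from \eqref{varepsilon} and \eqref{chap4_eq:marginal_bn} — is the technical heart of the lemma; everything else is the union-bound and uniform-continuity routine sketched above.
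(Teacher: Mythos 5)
Your skeleton — (i) Kesten--Stigum control of $|D_n|$, (ii) negligibility of the light-tailed types along every path, (iii) at most one large type-$Q$ displacement per path, then a comparison through test functions supported away from the origin — is exactly the paper's. The genuine gap is in your execution of step (ii), which is precisely where the multi-type structure has to be paid for. You truncate the type-$p$ displacements at $\delta b_n$ and assert that the truncated path sums are $o(b_n)$ ``with deviations controlled by a Markov or Bernstein bound at the scale $b_n$'', after having correctly observed that the per-path failure probability must be $o(\rho^{-n})$ to survive the union bound over roughly $W\rho^n$ paths. Markov cannot deliver this: whenever $\alpha>1$ the dominated tail forces $\exptn|X_1^{(p)}|<\infty$, so the Markov bound is of order $n\,\exptn\big[|X_1^{(p)}|\,\bbo(|X_1^{(p)}|\le\delta b_n)\big]/(\theta b_n)\asymp n/b_n\approx n\rho^{-n/\alpha}$, which is vastly larger than $\rho^{-n}$, and the union bound blows up. A Bernstein/Bennett bound can be made to work, but only if the truncation level is tied to the target (roughly $\delta$ a small multiple of $\theta$, or lower) and the truncated variance is estimated via Karamata; none of this is supplied, and it is exactly the technical heart you flagged.

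The paper sidesteps concentration altogether by thresholding each displacement at $\theta b_n/n$ rather than $\delta b_n$: on the event $A_n^{\upp}(\theta)$ that no type-$p$ displacement on any path exceeds $\theta b_n/n$, the path sum is deterministically at most $\theta b_n$ (a path has at most $n$ type-$p$ edges), so only $\prob\big((A_n^{\upp}(\theta))^c\big)$ needs control. The union bound then costs $|D_n|\cdot n$, Potter's bounds produce the polynomial factor $n^{1+\gamma+\alpha+\eta_1+\eta_2}$ in \eqref{chap4_eq:final_upper_bound_negligible_lighter_tail}, and this is absorbed by the exponential decay of $\varepsilon(b_n)$ established in Lemma \ref{chap4_lemma:upper_bound_varepsilon} — the fact you anticipated informally as ``$n\,\varepsilon(b_n)\to 0$''. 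Replacing your truncation level $\delta b_n$ by $\theta b_n/n$ collapses your argument onto the paper's. Two smaller points: a path of length $n$ carries order $n$ type-$Q$ edges, not ``$O(1)$ in expectation'' (harmless, since the two-big-jumps probability per path is still $O(n^2\rho^{-2n})=o(\rho^{-n})$); and in the final comparison the relevant fact is tightness of the number of large points (not a bounded expectation, which actually diverges), which the paper handles by fixing a Lipschitz $f$ supported on $\{|x|>\zeta\}$ with $\theta<\zeta/2$ and deferring to Subsection~4.4 of \cite{bhattacharya:hazra:roy:2014} — essentially the route you sketch.
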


 After this step we ignore the displacements associated to the particles of type $p \in \{1, \ldots, Q-1\}$,  but we do not delete the vertices (or the edges) from the tree $\bbt$ to keep the tree structure unchanged.

Then we follow the twofold truncation technique (from \cite{bhattacharya:hazra:roy:2016}) for the multi-type Galton-Watson tree $\bbt$.
For the first one, we fix an integer $K<n$ and we cut the tree at the $(n-K)^{th}$ generation. After cutting the tree $\bbt$, we are left with $|D_{n-K}|$ many independently distributed multi-type Galton-Watson trees consisting of $K$ generations, with the particles in $D_{n-K}$ as the roots. For each $\uv \in D_n$, define $\Iv^\upQ(K) = \{ \uu \in \Iv^\upQ : |\uu \to \uv| \le K \}$ where $\uu \to \uv$ denotes the unique geodesic path from $\uu$ to $\uv$. Define a new point process (associated to the forest obtained after cutting) as
\alns{
\tnk = \sum_{|\uv|=n} \sum_{\uu \in \Iv^\upQ (K)} \delta_{b_n^\inv \Xu}.
}
The following  can easily be derived from Subsection~4.1 in \cite{bhattacharya:hazra:roy:2014} making obvious changes in notations:
\aln{
\lim_{K \to \infty} \limsup_{n \to \infty} \prob \bigg[ \distvg(\tilde{N}_n, \tnk) > \epsilon \bigg] =0
}
for every $\epsilon>0$.

Now we shall prune the forest obtained after cutting. There are $|D_{n-K}^\upp|$ many sub-trees consisting of $K$ generations with roots of type $p$ for $p=1, \ldots, Q$. Let $\bbf_p$ denote the collection of all sub-trees with root of type $p$ for every $p=1, \ldots, Q$. For a fixed $p$, the members of the forest $\bbf_p$ is enumerated as $\bbt_{p1}, \ldots, \bbt_{p |{D_{n-K}}^\upp|}$.  Suppose $\bbt_{pj}^\upQ$ denotes the collection of all vertices of type $Q$ in the tree $\bbt_{pj}$ for every $j = 1, \ldots, |D_{n-K}^\upp|$ and $p=1, \ldots, Q$.
Observe that
\aln{
 \tnk = \sum_{p=1}^Q \sum_{j =1}^ {Z_{n-K}(p)} \sum_{\uu \in \bbt_{pj}^\upQ} A_{\uu} \delta_{b_n^\inv \Xu}, \label{chap4_eq:defn_tnk}
}
where $A_{\uu}$ denotes the total number of descendants of the particle $\uu$ at the $K^{th}$ generation of the sub-tree $\bbt_{pj}$ containing $\uu$.

Define for every $B \in \bbn$,
\alns{
Z_1^\upp (q,B) = Z_1^\upp(q) \bbo(Z_1^\upp(q) \le B) + B \bbo(Z_1^\upp (q) > B)
}
which is the random variable $Z_1^{(p)}(q)$ truncated at $B$ (for every $p,q=1, \ldots, Q$). Let $\mu_{pq}(B) = \exptn (Z_1^\upp(q, B))$ be the $(p,q)^{th}$ element of the $Q \times Q$ matrix $M(B)$ for every $p,q=1, \ldots, Q$. Assume that $\rho_B$ be the Perron-Frobenius eigenvalue of $M(B)$. Since each $\mu_{pq}(B) \ge 1$; using the same argument as in \eqref{chap4_ass:branching_frobenious}. $\rho_B > 1$. Finally we shall prune the sub-trees $\{\bbt_{pj} : 1 \le p \le Q, 1\le j \le Z_{n-K}(p), \}$ according to the following algorithm:
\been
\item[P1] Fix $p=1$ and $j=1$. 

\item[P2] Look at the root of $\bbt_{11}$. If it has more than $B$ descendants of type $1$ in the next generation, keep $B$ of them and discard others. Otherwise, do nothing. Do it for the other types of descendants in the first generation.

\item[P3] Repeat  Step P2 to all the particles at generation $1$. Continue till $K$-th generation.

\item[P4] Repeat Step P2 and P3 for the other trees in $\bbf_1$.

\item[P5] Repeat Step P2, P3 and P4 for the other forests $\bbf_p$ for $2\le p \le Q$.
\een

The pruned $j^{th}$ member of the forest $\bbf_{p}$ is denoted by $\bbt_{pj}(B)$ for all $p=1, \ldots,  Q$ and $j=1, \ldots,  |D_{n-K}^\upp|$. Define
\alns{
\tnkb = \sum_{p=1}^Q \sum_{j=1}^{Z_{n-K}(p)} \sum_{\uu \in \bbt_{pj}^\upQ(B)} \aub \delta_{b_n^\inv \Xu}
}
which is the point process associated to the pruned forest. The next step is to note that
\alns{
\lim_{B \to \infty} \limsup_{n \to \infty} \prob \Big( \distvg(\tnk, \tnkb) > \epsilon \Big) =0
}
for every $\epsilon>0$ and fixed integer $K$ which can be established in parallel to the proof of Lemma~4.2 in \cite{bhattacharya:hazra:roy:2016} and Lemma~3.3 in \cite{bhattacharya:hazra:roy:2014}.

Final step in proving the weak convergence is given in next lemma.

\begin{lemma} \label{lemma:weaklimit_multi}

\let\myenumi\theenumi
\let\mylabelenumi\labelenumi
\renewcommand{\theenumi}{L\myenumi}
\renewcommand{\labelenumi}{{\rm (\theenumi)}}

Under the assumptions of Theorem \ref{thm:main_thm_multi} for all $K \ge 1$ and $B$ large enough so that $\rho_B>1$, there exist point processes $N_*^{(K,B)}$ and $N_*^{(K)}$ such that
\been
\item $\tnkb \Rightarrow N_*^{(K,B)}$ as $n \to \infty$,  \label{chap4_item:weaklim1}
\item $N_*^{(K,B)} \Rightarrow N_*^{(K)}$ as $B \to \infty$, \label{chap4_item:weaklim2}
\item and $N_*^{(K)} \Rightarrow N_*$ as $K \to \infty$ \label{chap4_item:weaklim3}
\een
in the space of all Radon measures on $[-\infty, \infty] \setminus \{0\}$, equipped with vague topology. Furthermore, $N_*$ admits the representation in \eqref{chap4_eq:limiting_pp_multi} and also an SScDPPP representation.
\end{lemma}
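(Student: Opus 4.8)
\emph{Proposed approach.} The plan is to prove all three weak convergences by establishing convergence of the Laplace functionals $f\mapsto\exptn[e^{-N(f)}]$ over $f\in\ckr$, which characterises weak convergence of point processes on $\scrm(\zrer)$ in the vague topology. For part~\ref{chap4_item:weaklim1} I would fix $f\in\ckr$ and condition on $\bld Z_{n-K}$: by the decomposition \eqref{chap4_eq:defn_tnk} and its pruned analogue, $\tnkb$ is then a superposition of $Z_{n-K}(p)$ ($1\le p\le Q$) independent copies of the point process $\scrl_n^{(p)}$ generated by a single pruned $K$-generation subtree rooted at a type-$p$ particle, whence
\[
\exptn\!\big[e^{-\tnkb(f)}\mid\bld Z_{n-K}\big]=\prod_{p=1}^Q\Big(1-\exptn\!\big[1-e^{-\scrl_n^{(p)}(f)}\big]\Big)^{Z_{n-K}(p)}.
\]
The analytic core will be to show that $\scrl_n^{(p)}$, regarded as a random element of $\scrmrz$, is regularly varying: combining Assumption~\ref{chap4_ass:disp_typeQ}(b) with the one-large-jump principle inside the bounded pruned subtree (at most one of its finitely many edge-displacement vectors $\bfx^\upQ$ is of order $b_n$, and the descendant multiplicities $\aub$ are bounded) should yield $\rho^n\prob(\scrl_n^{(p)}\in\cdot)\hlconv m^{(p,K,B)}\in\bbm(\scrmrz)$, where $m^{(p,K,B)}$ is supported on single-cluster configurations $\sum_k a_k\delta_{\xi_k}$ arising from a realisation of a type-$p$ pruned subtree in which exactly one internal particle carries a $\lambda$-distributed displacement vector $(\xi_1,\xi_2,\dots)$ to its (at most $B$) type-$Q$ children, $a_k$ being the number of $K$th-generation descendants of the $k$th such child. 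Since $\nu\mapsto1-e^{-\nu(f)}$ is bounded, continuous and controlled near the null measure by $\nu\mapsto\nu(f)$, this gives $\rho^n\exptn[1-e^{-\scrl_n^{(p)}(f)}]\to\Phi^{(p,K,B)}(f):=\int_{\scrmrz}(1-e^{-\nu(f)})\,m^{(p,K,B)}(\dtv\nu)<\infty$. Feeding this back into the product via $\log(1-a)=-a+O(a^2)$, together with $Z_{n-K}(p)\big(\exptn[1-e^{-\scrl_n^{(p)}(f)}]\big)^2\to0$ and the Kesten--Stigum limit $\rho^{-(n-K)}Z_{n-K}(p)\to W\varsigma_p$ a.s.\ from \eqref{chap4_eq:Kesten_Stigum_limit_multi}, the conditional Laplace functional would converge a.s.\ to $\exp\{-\rho^{-K}W\sum_p\varsigma_p\Phi^{(p,K,B)}(f)\}$, and dominated convergence would yield
\[
\exptn\!\big[e^{-\tnkb(f)}\big]\longrightarrow\exptn_W\!\Big[\exp\big\{-\rho^{-K}W{\textstyle\sum_{p=1}^Q}\varsigma_p\,\Phi^{(p,K,B)}(f)\big\}\Big],
\]
which, by Proposition~\ref{fact:superposition_clt} applied conditionally on $W$, is the Laplace functional of a Cox cluster process $\nskb$ with cluster-intensity $\rho^{-K}W\sum_p\varsigma_p m^{(p,K,B)}$ on $\scrmrz$; this settles~\ref{chap4_item:weaklim1} and identifies $\nskb$.

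For parts~\ref{chap4_item:weaklim2} and~\ref{chap4_item:weaklim3} I would pass to the limit in these closed-form Laplace functionals, first in $B$ and then in $K$. As $B\to\infty$ the truncated offspring $Z_1^{(q)}(Q,B)$ and the truncated descendant counts increase to their untruncated values, so $m^{(p,K,B)}$ increases to a measure $m^{(p,K)}$; justifying the passage under the integral via the moment assumption~\ref{chap4_ass:branching_Kesten_Stgum} (exactly as in Lemma~4.2 of \cite{bhattacharya:hazra:roy:2016}) gives $\rho^{-K}\sum_p\varsigma_p\Phi^{(p,K,B)}(f)\to\rho^{-K}\sum_p\varsigma_p\Phi^{(p,K)}(f)$, hence $\nskb\Rightarrow N_*^{(K)}$. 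For part~\ref{chap4_item:weaklim3} I would decompose $m^{(p,K)}$ according to the generation $i\in\{0,\dots,K-1\}$ and type $q$ of the particle carrying the large vector: such a particle occurs with expected multiplicity $(M^i)_{p,q}=\rho^i\vartheta_p\varsigma_q+r^{(i)}_{p,q}$, $|r^{(i)}_{p,q}|<\rho_0^i$ (Assumption~\ref{chap4_ass:branching_frobenious}), and each of its type-$Q$ children, born at generation $i+1$, has a number of $K$th-generation descendants distributed as $\sum_{r=1}^Q Z_{K-1-i}^{(Q)}(r)$. Substituting $j=K-1-i$, using $\sum_p\varsigma_p\vartheta_p=\varsigma^t\vartheta=1$ and $\sum_{j\ge0}(\rho-1)\rho^{-1-j}=1$, and noting the error contributed by the $r^{(i)}_{p,q}$ is $O((\rho_0/\rho)^K)$, one would find
\[
\rho^{-K}\sum_{p=1}^Q\varsigma_p\,m^{(p,K)}(\cdot)\;\longrightarrow\;(\rho-1)^\inv\exptn_{G,\bld T}\Big[\lambda\big(\big\{\bld x\in\rnz:{\textstyle\sum_{k=1}^{G}}T_k\,\delta_{x_k}\in\cdot\big\}\big)\Big]\quad(K\to\infty),
\]
the laws of $G$ and $\bld T$ being exactly those fixed before Theorem~\ref{thm:main_thm_multi}; consequently the Laplace functional of $N_*^{(K)}$ converges to $\exptn_W\!\big[\exp\{-(\rho-1)^\inv W\,\exptn_{G,\bld T}\!\int_{\rnz}(1-e^{-\sum_{k=1}^{G}T_k f(x_k)})\,\lambda(\dtv\bld x)\}\big]$.

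Finally I would compute the probability generating functional of the point process $N_*$ in \eqref{chap4_eq:limiting_pp_multi} directly --- conditioning on $W$, using that $\poi$ is a Poisson random measure of intensity $\lambda$ carrying i.i.d.\ marks $(G_l,\bld T_l)$, and pulling $((\rho-1)^\inv W)^{1/\alpha}$ out of the integral through the scaling $\lambda(a^{-1}\cdot)=a^\alpha\lambda(\cdot)$ --- to obtain
\[
\exptn\!\big[e^{-N_*(f)}\mid W\big]=\exp\Big\{-(\rho-1)^\inv W\,\exptn_{G,\bld T}\!\int_{\rnz}\!\big(1-e^{-\sum_{k=1}^{G}T_k f(x_k)}\big)\,\lambda(\dtv\bld x)\Big\},
\]
which coincides with the limit obtained above, so $N_*^{(K)}\Rightarrow N_*$ with $N_*$ given by \eqref{chap4_eq:limiting_pp_multi}. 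Conditionally on $W$ this Laplace functional has the form appearing in Proposition~\ref{fact:superposition_clt}, so $N_*$ given $W$ is a $\stas$ point process; by Example~8.6 of \cite{davydov:molchanov:zuyev:2008} it then admits a scale-decorated Poisson representation, and re-incorporating the factor $((\rho-1)^\inv W)^{1/\alpha}$ as a random scale gives the SScDPPP representation --- concretely, writing $\lambda(\dtv\bld x)=\alpha r^{-\alpha-1}\,\dtv r\,\Theta(\dtv\omega)$ in polar coordinates $\bld x=r\omega$ exhibits $N_*\eqd\mbfs_U\sum_{i\ge1}\mbfs_{\lambda_i}\point_i$, with $\{\lambda_i\}$ a Poisson random measure of intensity $\malpha$, decoration $\point=\sum_{k=1}^{G}T_k\delta_{\omega_k}$ built from a $\Theta$-directional vector $\omega=(\omega_1,\omega_2,\dots)$, and $U$ equal to $((\rho-1)^\inv W)^{1/\alpha}$ up to a deterministic constant. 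Putting parts~\ref{chap4_item:weaklim1}--\ref{chap4_item:weaklim3} together with the three preceding approximation steps (Lemma~\ref{chap4_lemma:one_large_jump}, the cut at generation $n-K$, and the pruning) through a routine triangle-inequality argument in the vague metric $\distvg$ then gives Theorem~\ref{thm:main_thm_multi}.

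The hard part will be part~\ref{chap4_item:weaklim1}: lifting the (marginal and joint) regular variation of the displacement vector $\bfx^\upQ$ in Assumption~\ref{chap4_ass:disp_typeQ} to Hult--Lindskog regular variation of the random point measure $\scrl_n^{(p)}$ on $\scrmrz$ with the correct limit $m^{(p,K,B)}$, controlling the one-large-jump analysis inside a random (though, thanks to pruning, bounded) subtree whose atoms carry integer weights, and justifying the interchange of $\lim_n$ with $\exptn_W$. The bookkeeping in part~\ref{chap4_item:weaklim3} --- matching the laws of $G$ and $\bld T$ and absorbing the Perron--Frobenius error terms $r^{(i)}_{p,q}$ --- is routine but must be done with care, while the passage $B\to\infty$ in part~\ref{chap4_item:weaklim2} is where the Kesten--Stigum moment condition~\ref{chap4_ass:branching_Kesten_Stgum} is needed for integrability.
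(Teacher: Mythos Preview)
Your proposal is correct and follows essentially the same route as the paper: Laplace functional convergence via conditioning on the branching structure at generation $n-K$, Hult--Lindskog regular variation of the single-subtree point process (this is the paper's Lemma~\ref{lemma:hlconv_t11_multi}), the Kesten--Stigum limit to handle the random exponent, dominated convergence for $B\to\infty$, the Perron--Frobenius decomposition $M^{m}=\rho^{m}P+R^{m}$ for $K\to\infty$, and direct computation of the Laplace functional of $N_*$ via an auxiliary marked Cox process. The only noteworthy technical device you do not name explicitly is the paper's ``regularization'' step---padding each pruned subtree so that every vertex has exactly $B$ children of each type (with $\aub=0$ on the added edges)---which places the displacement vector $\tilde X_1$ in a fixed finite-dimensional space $\tilde\real^B$ and thereby reduces the HL convergence you describe to the product form \eqref{chap4_eq:disp_Au_joint_tree1}; this is precisely the clean implementation of your ``one-large-jump inside a bounded subtree'' idea.
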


To prove this lemma we shall modify and use the technique ``regularization" as used in \cite{bhattacharya:hazra:roy:2016}, described in Subsection \ref{chap4_subsec:weak}. The main observations leading to the proof is as follows. Note that after cutting and pruning the point process $\tnkb$ can be written as the superposition $|D_{n-K}|$ point processes which are independently distributed. Define
\aln{
N'^{(K,B,p,j)} = \sum_{\uu \in \bbt_{pj}^\upQ(B)} \aub \delta_{\Xu}
}
where $\aub$ denotes the number of descendants in the $K^{th}$ generation of the vertex $\boldsymbol{\varpi}$   and $N'^{(K,B,p,j)}_n = \mbfs_{b_n^\inv} N'^{(K,B,p,j)}$. The following lemma shows that, each of these point processes are regularly varying (in the sense of \cite{hult:lindskog:2006}) in the space $\scrmrz$.

\begin{lemma} \label{lemma:hlconv_t11_multi}
$N'^{(K,B)}_n$ is the superposition of $|D_{n-K}|$ many independent point processes $\{N'^{(K,B,p,j)}_n : p = 1, 2 , \ldots, Q \mbox{ and } j = 1,2, \ldots, |D_{n-K}^\upp|\}$ and there exists non-null measures $\{m_p^*: p=1,2, \ldots,Q\}$ such that,
\alns{
m_n^{(p)} = \rho^n \prob \Big( \mbfs_{b_n^\inv} N^{(K,B,p,j)} \in \cdot \Big) \hlconv m_p^*(\cdot)
}
for every $1 \le j \le |D_{n-K}^\upp|$ and $p=1, 2, \ldots, Q$.
\end{lemma}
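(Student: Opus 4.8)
The plan is to establish the Hult–Lindskog convergence of the laws of the (suitably normalized) single-tree point processes $N'^{(K,B,p,j)}_n$ by reducing it, one type at a time, to the regular variation assumptions \eqref{chap4_eq:marginal_bn}–\eqref{chap4_eq:hlconv_bn} on the displacement vector $\bfx^{(Q)}$. Fix $p$, fix $j$, and recall that $N'^{(K,B,p,j)} = \sum_{\uu \in \bbt_{pj}^\upQ(B)} \aub \delta_{\Xu}$, where the sum runs over the type-$Q$ vertices of the pruned, $K$-generation tree $\bbt_{pj}(B)$, each such vertex carrying a displacement $\Xu$ drawn (conditionally on the tree shape) from an independent copy of $\bfx^{(Q)}$, and $\aub$ is the number of $K^{th}$-generation descendants of $\uu$ in that pruned tree. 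The first step is to condition on the pruned tree shape: there are only finitely many possible shapes for a $K$-generation tree with at most $B$ children of each type per vertex, so the random measure $\rho^n\prob(\mbfs_{b_n^\inv}N'^{(K,B,p,j)}\in\cdot)$ is a finite convex combination (with weights $\rho^n$ times the shape probabilities — note $\rho^n$ scaling is carried through) of the conditional laws given each shape. It therefore suffices to prove HL-convergence for each fixed tree shape.

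Second step: for a fixed tree shape, the point process is $\sum_{\uu} a_\uu \delta_{b_n^{-1}X_\uu}$ where the index set and the integer multiplicities $a_\uu$ are now deterministic, and the $X_\uu$ are the coordinates of independent copies of $\bfx^{(Q)}$ — specifically, the displacements attached to a common parent form one copy of $\bfx^{(Q)}$, and different parents give independent copies. I would organize the type-$Q$ vertices by their depth in $\bbt_{pj}(B)$ and their parent; each sibling-group of type-$Q$ vertices is a block of coordinates of one regularly-varying vector $\bfx^{(Q)}$. The map that sends a realization of all these (finitely many) independent $\real^\bbn$-vectors to the point measure $\sum a_\uu \delta_{x_\uu}$ is continuous from the product space (minus the origin) into $\scrm(\zrer)$ away from the null measure, and is positively homogeneous in the sense needed to interchange with scalar multiplication. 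Because $\rho^n\prob(b_n^{-1}\bfx^{(Q)}\in\cdot)\hlconv\lambda$ by \eqref{chap4_eq:hlconv_bn} and because under the ``one-big-jump'' heuristic only one of the finitely many independent vectors can be large, the product measure $\big(\rho^n\prob(b_n^{-1}\bfx^{(Q)}\in\cdot)\big)^{\otimes(\text{\#blocks})}$, restricted away from the origin, HL-converges to the sum over blocks of $\lambda$ on that block with Dirac masses at $0$ on the others; this is the standard computation behind \eqref{chap4_eq:defn_lamiid}, now applied at the level of a dependent vector rather than i.i.d.\ scalars. Pushing this limit measure forward through the (continuous, homogeneous) point-measure map gives the claimed $m^{(p),\text{shape}}_*$, and summing over shapes with the appropriate $\rho^n\times$(shape probability) weights — which converge because the shape probabilities are $n$-free and $\rho^n\to\infty$ is absorbed correctly once one notes the leading contribution comes from shapes arising with the right generational count — yields $m_p^*$. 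Non-nullity of $m_p^*$ follows because $\lambda$ is non-null and at least one type-$Q$ vertex with $a_\uu\ge 1$ appears with positive probability in some admissible shape (here one uses that $M$ is positively regular, so type-$Q$ descendants do occur).

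The main obstacle I expect is the bookkeeping in step two: verifying that the point-measure-valued map is continuous and ``boundedly finite away from $\nullm$'' in the HL sense (i.e.\ that the preimage of a neighbourhood of $\nullm$ contains a neighbourhood of the origin of the product space), and, more delicately, justifying the passage to the limit of the product of the scaled laws — this is precisely a multivariate regular-variation / one-large-jump statement, and one must be careful that the ``light'' coordinates (the blocks that stay small) contribute only Dirac masses at $0$ and do not create spurious mass near $\nullm$. This is morally Lemma 4.1–type material from \cite{bhattacharya:hazra:roy:2016}, but here each ``jump site'' is not a single i.i.d.\ scalar but a block of a dependent regularly varying vector, and the multiplicities $\aub$ must be tracked through the limit; keeping the $\rho^n$ normalization synchronized with the branching numbers $Z_{n-K}(p)$ (whose growth is $\rho^{n-K}\varsigma_p W$-like by \eqref{chap4_eq:Kesten_Stigum_limit_multi}, though at this conditional stage they are just fixed shapes being summed) is the part most prone to error. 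Once the convergence $m_n^{(p)}\hlconv m_p^*$ is in hand, the remaining assertions of the lemma — independence of the $N'^{(K,B,p,j)}_n$ for distinct $(p,j)$ and the fact that $N'^{(K,B)}_n$ is their superposition — are immediate from the construction of the pruned forest, where distinct subtrees use disjoint, independent families of displacements.
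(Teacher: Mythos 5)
Your outline is sound and proves the lemma as stated, but it runs along a genuinely different mechanical route than the paper. You condition on the (finitely many) possible shapes of the pruned $K$-generation tree $\bbt_{pj}(B)$, so that the multiplicities $\aub$ become deterministic and the displacements split into finitely many independent blocks of coordinates of $\bfx^{(Q)}$, and you then invoke a one-large-jump limit for the jointly scaled block law followed by a continuous-mapping (pushforward) argument into $\scrmrz$; summing over shapes with their $n$-free probabilities gives $m_p^*$, and non-nullity follows as you say. The paper instead \emph{regularizes} the pruned trees (adding dummy vertices with $\aub=0$ and fresh displacements) so that the shape is deterministic and all the randomness of the genealogy sits in the multiplicity vector $\tilde A_1\sim\mathscr{G}$, proves the joint convergence $\rho^n\prob(b_n^\inv\tilde X_1\in\cdot,\tilde A_1\in\cdot)\hlconv\tau\otimes\mathscr{G}$ as in \eqref{chap4_eq:disp_Au_joint_tree1}, and then verifies HL convergence not through a mapping theorem but through the Hult--Samorodnitsky test functionals, which lets it \emph{compute} the limit explicitly via Wald's identity, arriving at the closed form \eqref{chap4_eq:limit_hl_mq} for $m_p^*$ in terms of $M(B)$, $U_1^\upq(Q,B)$ and $\Delta^\upQ_{K-m,j}(B)$. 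Your approach buys a cleaner conceptual reduction (no dummy vertices, no bookkeeping of a joint law with $\mathscr{G}$), at the price of having to verify continuity and the ``bounded away from $\nullm$'' property of the point-measure map, and of delivering $m_p^*$ only as an abstract mixture of pushforwards --- whereas the explicit form is exactly what the paper feeds into the Laplace-functional computation in the proof of Lemma \ref{lemma:weaklimit_multi}, so you would need an extra identification step there.

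Two small corrections to your write-up. First, the object that converges is the single scaling $\rho^n$ applied to the \emph{joint} (product) law of all blocks, $\rho^n\,\prob\big(b_n^\inv(\text{block}_1,\ldots,\text{block}_d)\in\cdot\big)\hlconv\sum_i \delta_0\otimes\cdots\otimes\lambda^{(\cdot)}\otimes\cdots\otimes\delta_0$, not the tensor power $\big(\rho^n\prob(b_n^\inv\bfx^{(Q)}\in\cdot)\big)^{\otimes d}$, which would carry $\rho^{nd}$; your stated limit is the right one (it is the analogue of \eqref{chap4_eq:defn_lamiid} and of the paper's $\tau$), so this is a notational slip. Second, there is no issue of a ``leading contribution from shapes with the right generational count'': the shape probabilities are free of $n$, every shape contributes with its fixed weight, and the factor $\rho^n$ is absorbed solely by the $\rho^{-n}$-order tail of the displacement blocks coming from \eqref{chap4_eq:hlconv_bn}.
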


Finally using superposition limit theorem  described in Proposition~\ref{fact:superposition_clt} (see Theorem~2.3 in \cite{bhattacharya:hazra:roy:2016}), we shall establish \eqref{chap4_item:weaklim1}. The proofs of \eqref{chap4_item:weaklim2} and \eqref{chap4_item:weaklim3} is in parallel to single type case (see \cite{bhattacharya:hazra:roy:2014, bhattacharya:hazra:roy:2016}.)

\section{The Proofs} \label{chap4_sec:proofs}

\subsection{Heaviest Tail Wins}\label{chap4_subsec:heaviest}

We start with the following consequence of Karamata theory.

\begin{lemma}\label{chap4_lemma:upper_bound_varepsilon}
Fix $\gamma' \in (0, \gamma)$ and $\beta \in (0, \alpha^\inv)$. For $\varepsilon$ defined in \eqref{varepsilon}
there exists $N \in \bbn$, such that $n \ge  N$ and $b_n$ as chosen in \eqref{chap4_eq:hlconv_bn} and  \eqref{chap4_eq:marginal_bn},
\aln{
\epsilon (b_n) \le \mbox{ const. }\rho^{-n(\gamma - \gamma')(1/\alpha - \beta)}, \label{chap4_eq:uuper_bound_varepsilon}
}
 and some $0 < \gamma' < \gamma$ and $0 < \beta < 1/ \alpha$.
\end{lemma}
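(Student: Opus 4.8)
The plan is to combine the Potter-type bounds for the regularly varying function $\varepsilon$ (whose index of regular variation is $-\gamma$ by \eqref{varepsilon}) with a lower bound on the growth of the scaling sequence $\{b_n\}$ coming from \eqref{chap4_eq:marginal_bn}. First I would record what \eqref{chap4_eq:marginal_bn} tells us about $b_n$: since $\rho^n \prob(b_n^\inv X_1^{(Q)} \in \cdot) \vconv \nu_\alpha$, the tail of $X_1^{(Q)}$ is regularly varying of index $-\alpha$, and the standard inversion of the normalizing sequence gives $b_n = \rho^{n/\alpha} \ell(\rho^n)$ for some slowly varying $\ell$. Fixing $\beta \in (0, 1/\alpha)$, slow variation of $\ell$ (Potter's bound applied to $\ell(\rho^n)$) yields, for all $n$ large, $b_n \ge \rho^{n(1/\alpha - \beta)}$ up to a positive multiplicative constant; equivalently $b_n \to \infty$ at least at this polynomial-in-$\rho^n$ rate.

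Next I would apply Karamata/Potter to $\varepsilon$ itself. Since $\varepsilon \in \regvar(-\gamma)$, for any $\gamma' \in (0,\gamma)$ there is $t_0$ such that for $t \ge t_0$ and $x \ge 1$ one has $\varepsilon(tx)/\varepsilon(t) \le \mathrm{const}\cdot x^{-(\gamma - \gamma')}$; more usefully, the one-variable Potter bound gives directly $\varepsilon(s) \le \mathrm{const}\cdot s^{-(\gamma-\gamma')}$ for all $s$ large. Evaluating at $s = b_n$ and substituting the lower bound $b_n \ge \mathrm{const}\cdot \rho^{n(1/\alpha-\beta)}$ from the previous paragraph gives
\[
\varepsilon(b_n) \le \mathrm{const}\cdot b_n^{-(\gamma-\gamma')} \le \mathrm{const}\cdot \rho^{-n(\gamma-\gamma')(1/\alpha-\beta)},
\]
for all $n \ge N$ with $N$ chosen large enough that both Potter bounds are in force, which is exactly \eqref{chap4_eq:uuper_bound_varepsilon}. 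I would then note that the final phrase in the statement ("for some $0<\gamma'<\gamma$ and $0<\beta<1/\alpha$") is harmless since $\gamma'$ and $\beta$ were fixed arbitrarily in these ranges at the outset.

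The only genuine subtlety — the "main obstacle", though it is mild — is the passage from \eqref{chap4_eq:marginal_bn} to a clean polynomial lower bound on $b_n$: one must be a little careful that vague convergence with normalization $\rho^n$ really pins down $b_n$ up to a slowly varying factor (this is the de Haan / regular-variation inversion theorem, e.g. as in Resnick), and then control the slowly varying factor by absorbing it into the $\rho^{-n\beta}$ slack — this is precisely why $\beta$ is taken strictly positive. Everything else is a direct invocation of Potter's inequalities, so I would keep those steps brief and cite the standard Karamata-theory reference already used in the paper.
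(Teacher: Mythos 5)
Your proposal is correct and takes essentially the same route as the paper: both arguments invert the tail condition \eqref{chap4_eq:marginal_bn} (via the regular-variation inversion in Resnick) to write $b_n$ as $\rho^{n/\alpha}$ times a slowly varying factor, and then absorb that factor and the slowly varying part of $\varepsilon$ into the $\beta$ and $\gamma'$ slack. The only difference is cosmetic — you invoke Potter's bounds where the paper writes out the explicit Karamata representation — so nothing further is needed.
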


\begin{proof}
For any $t\geq 0$, let $b(t)$ be a function such that
$b(n)=b_n$. Let $c(t) = b(\log_\rho t)$ and $\bar{F}_Q(x) = \prob(|X_1^\upQ| > x)$.
From \eqref{chap4_eq:hlconv_bn} it follows that
\alns{\rho^n \bar{F}_Q(b_n x) \to x^{-\alpha}
}
for all $x > 0$. Now using Proposition 0.1 of \cite{resnick:1987} we get that for all $x>0$,
\aln{
\dfrac{1}{b_n} U (\rho^n x) \to x^{1/\alpha},
}
where $U = \bigg( \dfrac{1}{\bar{F}_Q} \bigg)^{\leftarrow}$.  Observe that
\alns{
\lim_{t \to \infty} \dfrac{b(t+x)}{b(t)} = \lim_{t \to \infty} \dfrac{b(t+x)}{U(\rho^{t+x})} \dfrac{U(\rho^t \rho^x)}{b(t)} = \rho^{x/ \alpha}
}
and that
\alns{
\lim_{t \to \infty} \dfrac{c(tx)}{c(t)} = \lim_{t \to \infty} \dfrac{b(\log_\rho t + \log_\rho x)}{b(\log_\rho t)} = \rho^{\alpha^\inv \log_\rho x} = x ^{1/\alpha}.
}
Hence $c(\cdot) \in \regvar_{1/\alpha}$.

Using Karamata representation (see Theorem 0.6 in  \cite{resnick:1987}) we can write:
\alns{
c(t) = t^{1/\alpha} \phi_1(t) \exp \Big\{ \int_{1}^t \dfrac{1}{x} \eta_1(x ) \dtvx  \Big\},
}
where $ \phi_1 : \real_+ \to \real_+$ and $\eta_1 : \real _+ \to \real_+$ such that
\aln{
\lim_{t \to \infty} \phi_1(t ) = \phi_1 \in (0, \infty) \mbox{ and } \lim_{t \to \infty} \eta_1(t) =0.
}
We also have assumed that $\varepsilon( \cdot) \in \regvar_\gamma$ for some $\gamma > 0$. Thus
\aln{
\varepsilon(t) = t^{-\gamma} \phi_2(t) \exp \Big\{ \int_1^t \dfrac{1}{x} \eta_2(x) \dtvx \Big\},
}
where $\phi_2 : \real_+ \to \real_+$ and $\eta_2 : \real_+ \to \real_+$ such that
\aln{
\lim_{t \to \infty} \phi_2(t) = \phi_2 \in (0, \infty) \mbox{ and } \lim_{t \to \infty} \eta_2(t) =0.
}

We want to obtain an upper bound of
\aln{
\varepsilon(b(n)) &= \varepsilon( b(\log_\rho \rho^n)) \nonumber \\
& = \varepsilon(c(\rho^n)) \nonumber \\
&= \Big(c(\rho^n) \Big)^{-\gamma} \phi_2(c(\rho^n)) \exp \Big\{ \int_1^{(c(\rho^n))} \frac{1}{x} \eta_2(x) \dtvx \Big\}.
}
Now fix $\delta_1 > 0$ and $ \gamma' \in (0, \gamma)$. Then there exits $N_1 \in \bbn$ such that for all $n \ge N_1$,
\aln{
\phi_2(c \rho^n)  \exp \Big\{ \int_1^{(c(\rho^n))} \frac{1}{x} \eta_2(x) \dtvx \Big\} < \mbox{ const. }(\phi_2 + \delta_1) \Big(c(\rho^n) \Big)^{\gamma'}.
}
Now we derive the following inequality for all $n \ge N_1$:
\aln{
\varepsilon(b(n)) < \mbox{ const. } \Big(c(\rho^n) \Big)^{-(\gamma - \gamma')} = \mbox{ const. } \Bigg( \rho^{n/\alpha} \phi_1(\rho^n) \exp \Big\{ \int_1^{\rho^n} \frac{1}{x} \eta_1(x)  \dtvx \Big\} \Bigg)^{-(\gamma -\gamma')}. \label{chap4_eq:upperbound_varepsilon}
}
Fixing $\delta_2 > 0$, $\beta \in (0, \alpha^\inv)$ and using similar arguments we can prove that there exists $N_2 \in \bbn$ such that for all $n \ge N_2$,
\aln{
\phi_1(\rho^n) \exp \Big\{ \int_1^{\rho^n} \frac{1}{x} \eta_1(x) \dtvx \Big\} > (\phi_1+ \delta_2)  \rho^{-n\beta}. \nonumber
}
Combining this inequality with \eqref{chap4_eq:upperbound_varepsilon} proves \eqref{chap4_eq:uuper_bound_varepsilon} with $N= \max(N_1, N_2)$.
\end{proof}

\begin{proof}[Proof of Lemma \ref{chap4_lemma:one_large_jump}]
Consider a Lipschitz function $f \in \ckr$ with $\mbox{\rm support} (f) = \{x : |x| > \zeta \}$, where $\ckr$ denotes the class of all non-negative, bounded and continuous functions. To prove  \eqref{chap4_eq:aim_one_large_jump_multi}, it is enough to show that
\aln{
\limsup_{n \to \infty} \prob \Big( |N_n(f) - \tilde{N}_n(f)| > \epsilon \Big) = 0 \label{chap4_eq:goal_one_large_jump}
}
for every $\epsilon > 0$.

In order to prove this, at the beginning we will formalize the fact that on a fixed path the contribution of the $p^{th}$ type of particles is asymptotically negligible for $p=1, \ldots, Q-1$.
Fix $0 < \theta < \zeta/2$. For $p= 1, \ldots, Q-1$ we define:
\alns{
A_n^{(p)}(\theta) = \Bigg[ \bigcup_{|\uv|=n} \bigg( \sum_{\uu \in \Iv^{(p)}} \delta_{b_n^\inv |\Xu|} \Big(\theta/n, \infty \Big) \ge 1 \bigg) \Bigg]^c.
}
We will show that
\aln{
\lim_{n \to \infty} \prob \bigg(\Big(A_n^\upp(\theta) \Big)^c \bigg) =0 \label{chap4_eq:negligible_lighter_tail}
}
for every $p=1, \ldots , Q-1$. Fix $p$. To prove \eqref{chap4_eq:negligible_lighter_tail} it suffices to show that
\alns{
\lim_{n \to \infty} \probt\bigg(\Big(A_n^\upp(\theta) \Big)^c \bigg) =0, \mbox{ almost surely}
}
where $\probt(\cdot)$ denotes the probability conditioned on underlying Galton-Watson tree.
Observe that
\aln{
\probt \bigg[ \Big( A_n^\upp(\theta) \Big)^c \bigg]  \le \sum_{|\uv|=n} \probt \bigg( \sum_{\uu \in \Iv^\upp} \delta_{n b_n^\inv |\Xu|} (\theta, \infty) \ge 1 \bigg) . \label{chap4_eq:upper_bound1_negligible_lighter_tail}
}
Note that on a path $\Iv$, the displacements corresponding to the $p^{th}$ type of particles are independent and they are also identically distributed. Thus, conditioned on $\bbt$,
$$ \sum_{\uu \in \Iv^\upp} \delta_{n b_n^\inv |\Xu|} (\theta, \infty) \sim \mbox{ Binomial} \bigg(|\Iv^\upp|, \prob \Big(n |X^\upp_1| > b_n \theta \Big) \bigg). $$
Using  $|\Iv^\upp| \le n$,  Assumption \ref{chap4_ass:displacement_multi} and taking a sequence $\{X^{(p,i)} : i \ge 1\}$ of i.i.d. copies of $X_1^\upp$, we get the following upper bound to the right hand side of \eqref{chap4_eq:upper_bound1_negligible_lighter_tail}:
\aln{
\sum_{|\uv| = n} \prob \bigg( \sum_{i=1}^n \delta_{n b_n ^\inv |X^{(p,i)}|} (\theta, \infty) \ge 1 \bigg) & = |D_n | \prob \bigg( \sum_{i=1}^n \delta_{n b_n ^\inv |X^{(p,i)}|} (\theta, \infty) \ge 1 \bigg) \nonumber \\
& \le (\mbox{const}) |D_n| n \prob(n |X_1^\upp| > b_n \theta) \nonumber \\
& \le (\mbox{const})  |D_n| n \varepsilon(n^\inv b_n \theta) \prob( n |X_1^\upQ| > b_n \theta), \label{chap4_eq:upper_bound2_negligible_lighter_tail}
}
where $\{X^{(p,i)} : i \ge 1\}$ is the collection of independent copies of $X_1^\upp$. Fix $\eta >0$.

Fix $\eta_1 > 0$ and $\eta_2 > 0$. It follows from \eqref{chap4_eq:marginal_bn} that  $b_n  = \rho^{n/\alpha} L'(\mu^{n/\alpha})$ where $L'(\cdot)$ is a slowly varying function. As $\{n^\inv b_n\}$ is an increasing sequence, we can use Potter's bound combined with upper bound in Lemma  \ref{chap4_lemma:upper_bound_varepsilon} to get
\aln{
\varepsilon(n^\inv b_n \theta) \le (\mbox{const.}) n^{\gamma + \eta_1} \rho^{-n(\gamma- \gamma')(\alpha^\inv - \beta)} \frac{\varepsilon(b_n \theta)}{\varepsilon(b_n)} \label{chap4_eq:potters_bound_varepsilon}
}
for large enough $n$. Again using Potter's bound, we get that
\aln{
\prob \Big(n |X_1^\upQ | > b_n \theta \Big) \le (1- \eta_2)^\inv n^{\alpha + \eta_2} \prob(|X_1^\upQ| > b_n \theta) \label{chap4_eq:potters_bound}
}
for sufficiently large $n$. Finally combining \eqref{chap4_eq:potters_bound} and \eqref{chap4_eq:potters_bound_varepsilon}, we can get the following upper bound for \eqref{chap4_eq:upper_bound2_negligible_lighter_tail} as
\aln{
\mbox{ const.} \times \frac{|D_n|}{\rho^n} \times \frac{\varepsilon(b_n \theta)}{\varepsilon(b_n)} \times \rho^n \prob \Big( |X_1^\upQ| > b_n \theta \Big) \times n^{1 + \gamma + \alpha + \eta_1 + \eta_2} \rho^{-n(\gamma- \gamma')(\alpha^\inv - \beta)} \label{chap4_eq:final_upper_bound_negligible_lighter_tail}
}
for $ \gamma' \in (0, \gamma)$ and $\beta \in (0, \alpha^\inv)$. Note that by \eqref{chap4_eq:Kesten_Stigum_limit_multi}
\aln{
\lim_{n \to \infty} \frac{|D_n|}{\rho^n} = \lim_{n \to \infty} \dfrac{\sum_{p=1}^Q |D_n^\upp|}{\rho^n} = \lim_{n \to \infty} \frac{|\sum_{p=1}^Q Z_n^\upp|}{\rho^n} = W
}
almost surely. The other factors in \eqref{chap4_eq:final_upper_bound_negligible_lighter_tail} converges to finite positive limit except the last factor which converges to $0$. This completes the proof of \eqref{chap4_eq:negligible_lighter_tail}.

To formalize the fact that on a path at most one displacement associated with the $Q^{th}$ type of particles can be large, we define
\aln{
A_n^{(Q)}(\theta)= \Bigg[ \bigcup_{|\uv|=n} \bigg( \sum_{\uu \in \Iv^{(Q)}} \delta_{b_n^\inv |\Xu|} \Big(\theta/n, \infty \Big) \ge 2 \bigg) \Bigg]^c.
}
Then adopting the method described in Subsection~4.4 in \cite{bhattacharya:hazra:roy:2014}, one can prove that
\aln{
\lim_{n \to \infty} \prob \bigg[ \Big( A_n^\upQ(\theta) \Big)^c \bigg]=0. \label{chap4_eq:Q_larrge_jump}
}

Now we are ready to prove \eqref{chap4_eq:goal_one_large_jump} using \eqref{chap4_eq:negligible_lighter_tail} and \eqref{chap4_eq:Q_larrge_jump}. We can bound the probability in \eqref{chap4_eq:goal_one_large_jump} by
\aln{
\prob \bigg( \sum_{|\uv=n|} \Big| f(b_n ^\inv \Sv) - \sum_{\uu \in \Iv^\upQ} f(b_n^\inv \Xu) \Big| > \epsilon, ~~~ \bigcap_{p=1}^Q A_n^\upp(\theta) \bigg) + o(1). \label{chap4_eq:upper_bound1_goal_one_large_jump}
}
Let $T_\uv^\upp$ denote the maxima in modulus among the displacements on the set $\Iv^\upp$ ($p=1, \ldots, Q$) where maxima in modulus  on an empty set is assumed to be zero. We will discuss now the possible scenarios appearing on the event considered in \eqref{chap4_eq:upper_bound1_goal_one_large_jump}. One possibility is that $\max_{1 \le p \le Q} \max_{|\uv| = n} n T_\uv^\upp < b_n \theta$ and it is clear that in this case its probability is zero. The another possibility is that $\max_{1 \le p \le Q-1} \max_{|\uv|=n} n T_\uv^\upp < b_n \theta $ and $\max_{|\uv| =n} n T_\uv^\upQ > b_n \theta$. Then the probability in \eqref{chap4_eq:upper_bound1_goal_one_large_jump} equals
\alns{
\prob \bigg( \sum_{|\uv|=n} \Big| f(b_n^\inv \Sv) - f(b_n^\inv T_\uv^\upQ) \Big| > \epsilon, \bigcap_{p=1}^Q A_n^\upp(\theta) \bigg).
}
Starting from this step, by molding the proof discussed in Subsection 4.4 in \cite{bhattacharya:hazra:roy:2014}, it is easy to conclude the assertion of the lemma.
\end{proof}

\subsection{Computation of the Weak Limit}\label{chap4_subsec:weak}

To compute the weak limit of the sequence of point processes $\tnkb$, we shall follow the method of ``regularization'' as used in \cite{bhattacharya:hazra:roy:2016}. Recall that we have decided to identify each edge with its vertex away from the root and assign its type to the edge. Also recall that $A_{\boldsymbol{\varpi}}^{(B)}$ denotes the number of descendants of the vertex $\boldsymbol{\varpi}$ if the $K^{th}$ generation of the pruned subtree containing $\boldsymbol{\varpi}$.  Keeping this abuse of notation in mind, we shall modify the pruned sub-trees $\bbt_{pj}(B)$ for $j= 1, \ldots, |D_{n-K}^\upp|$ and $p=1, \ldots, Q$ according to the following algorithm:

\been
\item[R1] Fix $p=1$ and $j=1$, i.e. look at $\bbt_{11}(B)$.

\item[R2] Look at the root of $\bbt_{11}(B)$. If it has exactly $B$ descendants of type $1$ in the next generation then keep it as it is. If it has $l$ ($< B$) descendants of type $1$, then add $(B-l) $ descendants (of type $1$) and define $\aub=0$, where $\uu$ is newly added edge. Next do the same for descendants of other types. Now the root has $QB$ descendants in total in the next generation. Now attach an independent copy of $(X^{(1)}_1, \ldots, X^{(1)}_B, \ldots, X^\upQ_1, \ldots, X^\upQ_B)$ to the descendants.

\item[R3] Repeat R2 for the particles at the first generation of $\bbt_{11}(B)$. Continue till the $K^{th}$ generation.

\item[R4] Repeat R2 and R3 for the other trees in $\bbf_1$.

\item[R5] Repeat R2, R3 and R4 for the forests $\bbf_p$ for $2 \le p \le Q$.

\een

We shall abuse the notation and  denote the regularized sub-trees also by $\bbt_{pj}(B)$. Let us concentrate on  $p=1$ and $j=1$. To each edge $\uu \in \bbt_{11}(B)$, we associate the triplet $(m,q,k)$ where $m$ denotes the generation of the edge $\uu$, $q$ denotes the type of the edge $\uu$ and $k$ denotes the enumeration of the vertex $\uu$ among all the vertices of type $q$ at the $m^{th}$ generation of $\bbt_{11}(B)$. Using this notation, we can write down the vector of displacements associated to the particles of type $Q$ in the tree $\bbt_{11}(B)$ as follows:
\alns{
\tilde{X}_1 = \Big( X'_{(1,Q,1)}, \ldots, X'_{(1,Q,B)}, X'_{(2,Q,1)}, \ldots, X'_{(2, Q, QB^2)}, \ldots, X'_{(K,Q,1)}, \ldots, X'_{(K, Q, Q^{K-1}B^K)} \Big).
}
It takes values in
\alns{
\tilde{\real}^B = \prod_{t=1}^{B+ QB^2 + \cdots + Q^{K-1}B^K} \real.
}
Similarly we can write,
\alns{
\tilde{A}_1 = (A^{(B)}_{(1,Q,1)}, \ldots, A^{(B)}_{(1,Q,B)}, A_{(2,B,1)}^{(B)}, \ldots, A_{(2,B,QB^2)}^{(B)}, \ldots, A^{(B)}_{(K,Q,1)}, \ldots, A^{(B)}_{(K,Q,Q^{K-1}B^K)})
}
taking values in
\alns{
\tilde{S}^B = \prod_{t=1}^{B + QB^2 + \ldots + Q^{K-1}B^K} \{0, \ldots, B^K \} .
}

Let $\mathscr{G}(\cdot)$ denote the law of $\tilde{A}_1$.
Recall that,
\alns{
\rho^n \prob \Big( b_n^\inv (X^\upQ_1, \ldots, X^\upQ_B) \in \cdot \Big) \hlconv \lambda^{(B)} (\cdot)
}
on the space $\real^B \setminus \{0_B\}$ where $0_B = (0,\ldots, 0) \in \real^B$ and $\lambda^{(B)} = \lambda \circ \mbox{PROJ}_B^\inv$ with $\mbox{PROJ}_B: \real^\bbn \to \real^B$ defined by $\mbox{PROJ}_B ((u_i)_{i=1}^\infty) = (u_1, \ldots, u_B)$. Using this  combined with the fact that the branching mechanism and the displacements are independent, we get that
\aln{
\rho^n \prob(b_n^\inv \tilde{X}_1 \in \cdot, \tilde{A}_1 \in \cdot ) \hlconv \tau \otimes \mathscr{G} \label{chap4_eq:disp_Au_joint_tree1}
}
on $\tilde{\real}^B \setminus \{\boldsymbol{0}_B\}$ where $\boldsymbol{0}_B \in \tilde{\real}^B$ with all its elements as $0$ and
\alns{
\tau &= \sum_{m=1}^K \sum_{l \in J_m} \tau_{ml}
}
with
\alns{
\tau_{ml} =  \bigotimes_{t=1}^{B+QB^2 + \ldots + Q^{m-2}B^{m-1} +l -1} \delta_0
\otimes \lambda^{(B)} \bigotimes_{t=B+QB^2 + \ldots + Q^{m-2}B^{m-1} +l +B}^{B+QB^2 + \ldots + Q^{K-1}B^K} \delta_0,
}
where $J_m = \{r \in \{1,2, \ldots, Q^{m-1}B^m\} : r \equiv 1 \mod B\}$.

It is clear that after the regularization, the point process associated to the tree $\bbt_{11}(B)$ is modified. But an important point to note, is that this modification does not affect the dependence structure prevailing among the displacements associated to a tree and also among the trees. In particular, it does not affect the joint distribution of the point processes associated to the trees and also to a specific tree. The point process of displacements (without the scaling) associated to the tree $\bbt_{p1}(B)$ will be denoted by $N^{'(K,B,p,1)}$:
\alns{
N^{'(K,B,p,1)} = \sum_{\uu \in \bbt_{p1}^\upQ(B)} \aub \delta_{\Xu}
}
for every $p=1, \ldots, Q$.

\begin{proof}[Proof of Lemma \ref{lemma:hlconv_t11_multi}]

Note that it suffices to prove that $N^{'(K,B,p,1)}$ is regularly varying in the space of all Radon point measures $\scrmrz$. In order to establish this fact, we have to show that
\alns{
m_n^\upp = \rho^n \prob \Big( \mbfs_{b_n^\inv} N^{'(K,B,p,j)} \in \cdot \Big) \hlconv m^*_p(\cdot)
}
for some $m^*_p \in \bbm(\scrmrz)$ described in \eqref{chap4_eq:limit_hl_mq}  for all $j=1, \ldots, |D_{n-K}^{(p)}|$. Following \cite{hult:samorodnitsky:2010}, it is clear that then we have to establish that for every $f_1, f_2 \in \ckr$ and $\epsilon_1, \epsilon_2 > 0$,
\aln{
&\int \bigg( 1- e^{ - \Big(\nu(f_1) - \epsilon_1 \Big)_+ } \bigg)\bigg( 1- e^{ - \Big(\nu(f_2) - \epsilon_2 \Big)_+ } \bigg) m_n^\upp(\dtv \nu) \nonumber \\
&\hspace{2cm} \to \int \bigg( 1- e^{ - \Big(\nu(f_1) - \epsilon_1 \Big)_+ } \bigg)\bigg( 1- e^{ - \Big(\nu(f_2) - \epsilon_2 \Big)_+ } \bigg) m^*_p(\dtv \nu)  \label{chap4_eq:defn_hls_functional}
}
as $n\to\infty$, where $\nu(f) = \int f(x) \nu(\dtvx)$. Now we shall consider the case, $p=1$ and the other cases can be dealt similarly.
Following the steps discussed in \cite{bhattacharya:hazra:roy:2016} and using \eqref{chap4_eq:disp_Au_joint_tree1}, we get that
\aln{
&\int \bigg( 1- e^{ - \Big(\nu(f_1) - \epsilon_1 \Big)_+ } \bigg)\bigg( 1- e^{ - \Big(\nu(f_2) - \epsilon_2 \Big)_+ } \bigg) m_n^{(1)}(\dtv \nu) \nonumber \\
&\to  \sum_{m=1}^K \int \sum_{\tilde{a} \in \tilde{S}^B} \sum_{l \in J'_m }  \mathscr{G}(\tilde{a}) \bigg( 1- \exp\bigg\{- \Big(  \sum_{j=l}^{l+B-1} a_{(m,Q,j)} f_1(x_{j-l+1}) -\epsilon_1  \Big)_+ \bigg\} \bigg) \nonumber \\
& \hspace{1.5cm} \bigg( 1- \exp\bigg\{- \Big(  \sum_{j=l}^{l+B-1} a_{(m,Q,j)} f_2(x_{j-l+1}) -\epsilon_2  \Big)_+ \bigg \} \bigg) \lambda^{(B)}(\dtv \boldsymbol{x})  \label{chap4_eq:lim_n_hls_m1}
}
with $J'_m = \{l \in J_m : (a_{(m,Q,l)}, \ldots, a_{(m,Q, l+B-1)}) \neq (0,\ldots, 0)\}$ for every $m =1, \ldots, K$. To
describe the above limit, we will introduce new variables as follows.

\begin{itemize}

\item Let $\{U_{1i}^\upp(Q,B) : p=1,2, \ldots, Q \mbox{ and } i \ge 1\}$ be a collection of independent random variables such that for every $p \in \{1,2, \ldots,Q\}$, $U_{1i}^\upp(Q,B) \eqd Z_1^\upp(Q,B)$ for all $i \ge 1$.

\item We denote by $(Z^{(p)}_{i} (1,B), \ldots, Z^{(p)}_{i}(Q,B))$ the random vector of the number of particles at the $i^{th}$ generation for different types when root is of type $p$
in the pruned tree.
Let $\{\Delta_{K-m,i}^\upQ (B) : m=1,2, \ldots, K, i \ge 1\}$ be a collection of independent random variables such that for every fixed $m \in \{1,2, \ldots, K\}$,
\alns{
\Delta^\upQ_{K-m, i} (B) \eqd \sum_{q=1}^Q Z_{K-m}^\upQ (q,B)
}
for all $i \ge 1$. Assume further that the collection $\{\Delta_{K-m,i}^\upQ (B) : m=1, \ldots, K, i \ge 1\}$ is independent of the collection $\{U_{1i}^\upp(Q,B) : p=1, \ldots, Q \mbox{ and } i \ge 1\}$.
\end{itemize}

Fix a generation $1 \le m \le K$.
In order to compute the expectation under the law $\mathscr{G}(\cdot)$, we have to count only those vertices at the $m^{th}$ generation of type $Q$ with different parents in the $(m-1)^{th}$ generation which have at least one descendant at the $K^{th}$ generation of $\bbt_{11}(B)$.
First observe that, only the newly added vertex at the $(m-1)^{th}$ generation has children at the $m^{th}$ generation such that each of the children has zero descendant at the $K^{th}$ generation. Thus we start with the total number of particles at the $(m-1)^{th}$ generation (who are potential parents of the particles of type $Q$ at the $m^{th}$ generation) which is distributed according as $\sum_{q=1}^Q Z^{(1)}_{m-1} (q,B)$ (recall that root of $\bbt_{11}(B)$ is of type $1$). Secondly observe that, the number of particles at the $(m-1)^{th}$ generation is independent of the random vector denoting the number of children of them at the $m^{th}$ generation of type $Q$. It is clear that a particle of type $p'$ at the $(m-1)^{th}$ generation produces a random number of  children of type $Q$ according to the law $Z^{(p')}_1(Q,B)$. The third observation is that the number of descendants of a particle of type $Q$ at the $m^{th}$ generation is independent of the number of descendants of the other particles of type $Q$ at the $m^{th}$ generation. It is also
independent of the number of particles of type $Q$ at the $m^{th}$ generation and of the total number of particles at the $(m-1)^{th}$ generation.  Moreover, it is distributed according to the law of
\alns{
\sum_{q=1}^Q Z_{K-m}^{(Q)}(q,B).
}
The number of children of type $Q$ at the $m^{th}$ generation of the $i^{th}$ particle of type $p'$ at the $(m-1)^{th}$ generation has then the law of $U_{1i}^{(p')} (Q,B)$ for every $i =1, \ldots, Z_{m-1}^{(1)}(p,B)$ and $p=1, \ldots, Q$ which is independent of the vector  $(Z^{(1)}_{m-1} (1,B), \ldots, Z^{(1)}_{m-1}(Q,B))$. Moreover, the number of descendants at the $K^{th}$  generation of $\bbt_{11}(B)$ of the $i^{th}$ particle of type $Q$ at the $m^{th}$ generation has the law of $\Delta_{K-m,i}^\upQ(B)$ for every $i \ge 1$ which is also independent of $(Z^{(1)}_{m-1} (1,B), \ldots, Z^{(1)}_{m-1}(Q,B))$. Recall that $\bfe_i$ denotes the $i^{th}$ unit vector which has all components $0$ except the $i^{th}$ one for all $i=1,2, \ldots,Q.$ Hence the limiting expression in \eqref{chap4_eq:lim_n_hls_m1} becomes:
\aln{
& \sum_{m=1}^K \int \exptn \bigg[ \sum_{q=1}^Q \sum_{l=1}^{Z_{m-1}^{(1)}(q,B)}\bigg( 1- \exp \bigg\{ - \Big( \sum_{j=1}^{U_1^\upq(Q,B)} \Delta_{K-m,j}^\upQ(B) f_1(x_j) - \epsilon_1 \Big)_+ \bigg\}  \bigg) \nonumber \\
&\hspace{2cm} \bigg( 1- \exp \bigg\{ - \Big( \sum_{j=1}^{U_1^\upq(Q,B)} \Delta_{K-m,j}^\upQ(B) f_2(x_j) - \epsilon_2 \Big)_+ \bigg\}  \bigg) \bigg] \lambda(\dtv(x_1, \ldots, x_B)) \nonumber \\
&= \sum_{m=1}^K  \sum_{q=1}^Q \bfe_1^t \Big( M(B) \Big)^{m-1} \bfe_q \int \exptn \bigg[\bigg( 1- \exp \bigg\{ - \Big( \sum_{j=1}^{U_1^\upq(Q,B)} \Delta_{K-m,j}^\upQ(B) f_1(x_j) - \epsilon_1 \Big)_+ \bigg\}  \bigg) \nonumber \\
& \hspace{1cm} \bigg( 1- \exp \bigg\{ - \Big( \sum_{j=1}^{U_1^\upq(Q,B)} \Delta_{K-m,j}^\upQ(B) f_2(x_j) - \epsilon_2 \Big)_+ \bigg\}  \bigg) \bigg] \lambda(\dtv(x_1, \ldots, x_B)). \label{chap4_eq:exptn_exponent_m1}
}
In the last step, we are  using the Wald's identity. Note now that combing \eqref{chap4_eq:exptn_exponent_m1} and  \eqref{chap4_eq:lim_n_hls_m1} we can write down the limit in the form of right hand side of \eqref{chap4_eq:defn_hls_functional} (with $p=1$) where
\alns{
m_1^*(\cdot)  = \sum_{m=1}^K \sum_{q=1}^Q \bfe_1^t \Big(M(B) \Big)^{m-1} \bfe_q \exptn \bigg[ \lambda^{(B)} \bigg\{ (x_1, \ldots, x_B) \in \real^B : \sum_{j=1}^{U_1^\upq(Q,B)} \Delta_{K-m,j}^\upQ (B) \delta_{x_j} \in \cdot \bigg\} \bigg].
}
Similarly, for every $p=2, \ldots, Q$,
\aln{
m^*_p(\cdot) = \sum_{m=1}^K \sum_{q=1}^Q \bfe_p^t M^{m-1}(B) \bfe_q \exptn \bigg[ \lambda^{(B)} \bigg\{ (x_1, \ldots, x_B) \in \real^B : \sum_{j=1}^{U_1^\upq(Q,B)} \Delta_{K-m,j}^\upQ (B) \delta_{x_j} \in \cdot \bigg\} \bigg]. \label{chap4_eq:limit_hl_mq}
}
Following the same steps discussed in \cite{bhattacharya:hazra:roy:2016}, it can be verified that $m_p^* \in \bbm(\scrmrz)$ for every $p=1, \ldots, Q$.
\end{proof}

We are ready now to prove the main lemma of this section.

\begin{proof}[Proof of Lemma \ref{lemma:weaklimit_multi}] 
Recall that in the regularization step, we replaced the displacements corresponding the the particles of type $Q$ coming from same parent by an independent copy of $(X_1^\upQ, \ldots, X_B^\upQ)$. The new displacements corresponding to the vertex $\boldsymbol{\varpi}$ is denoted by $X'_{\boldsymbol{\varpi}}$.
By regularization construction, to prove the first statement it suffices to show the weak convergence
$\dnkb \Rightarrow N_*^{(K,B)}$ as $n \to \infty$
for
\alns{
\dnkb = \sum_{p=1}^Q \sum_{j=1}^{|D_{n-K} ^\upp| } \sum_{\uu \in \bbt_{pj}^{\upQ}(B)} \aub \delta_{b_n^\inv \Xu^\prime}.
}
Now we will consider the Laplace functional of $\dnkb$.

Consider a function $f \in \ckr$. We then have to find the limit of
\aln{
&\exptn \bigg[ \exp \bigg\{ - \dnkb(f) \bigg\} \bigg] \nonumber\\
&= \exptn \bigg[ \exp \bigg\{ - \sum_{p=1}^Q \sum_{j=1}^{Z_{n-K}(p)} \sum_{\uu \in \tpjq(B)} \aub f({b_n^\inv X'_\uu}) \bigg\} \bigg] \nonumber \\
&= \prod_{p=1}^Q \exptn \bigg[ \exptn \bigg( \exp \bigg\{ - \sum_{j=1}^{Z_{n-K}(p)} \sum_{\uu \in\tpjq(B)} \aub f({b_n^\inv X'_\uu}) \bigg\} \bigg| \calf_{n-K} \bigg) \bigg]. \label{chap4_eq:laplace_dnkb}
}
First we will focus on above conditional expectation and we will use the fact that $\{\aub : \uu \in \tpjq(B), j \ge 1 \}$ and $\{\Xu' : \uu \in \tpjq(B), j \ge 1 \}$ are independent of $\calf_{n-K}$. Note that it equals:
\aln{
\bigg[\exptn \bigg( \exp \bigg\{ - \sum_{\uu \in \bbt_{p1}^\upQ (B)} \aub f(b_n^\inv X'_\uu) \bigg\} \bigg) \bigg]^{Z_{n-K}(p)} \label{chap4_eq:lpalace_of_conditioned_dnkb}
}
since the point processes associated to the trees $\tpjq(B)$ are i.i.d.~for every fixed $p \in \{1, \ldots,Q\}$. Using fact that
\alns{
\bigg[\exptn \bigg(  \exp\bigg\{ - \mbfs_{b_n^\inv} N'^{(K,B,p,1)}(f) \bigg\} \bigg) \bigg]^{\rho^n} \to \exp \bigg\{-\int \Big(1-e^{-\nu(f)}\Big) m_1^*(\dtv \nu) \bigg\}
}
we can conclude that
\alns{
\bigg[ \exptn \bigg( \exp \bigg\{ - \sum_{\uu \in \bbt_{p1}^\upQ(B)} \aub f(b_n^\inv \Xu') \bigg\} \bigg) \bigg]^{\rho^n}
}
converges to
\alns{
&\exp \bigg\{ - \sum_{m=1}^K \sum_{q=1}^Q \bfe_p^t M^{m-1}(B) \bfe_q  \\
&\;\;\;\;\;\;\;\;\;\;\;\;\;\;\;\;\;\;\;\;\;\;\;\;\;\;\int \exptn \bigg( 1 - \exp \bigg\{ - \sum_{j=1}^{U_1^\upq(Q,B)} \Delta^\upQ_{K-m,j}(B) f(x_j) \bigg\} \bigg) \lambda^{(B)}(\dtv(x_1, \ldots, x_B)) \bigg\}
}
as $n \to \infty$. From the Kesten-Stigum Theorem we get that \eqref{chap4_eq:lpalace_of_conditioned_dnkb} converges to
\alns{
&\exp \bigg\{ - \frac{1}{\rho^K} W \varsigma_p \sum_{m=1}^K \sum_{q=1}^Q \bfe_p^t (M(B))^{m-1} \bfe_q \\
& \hspace{2cm} \int \exptn \bigg( 1 - \exp \bigg\{ - \sum_{j=1}^{U_1^\upq(Q,B)} \Delta^\upQ_{K-m,j}(B) f(x_j) \bigg\} \bigg) \lambda^{(B)}(\dtv(x_1, \ldots, x_B))\bigg\}
}
almost surely as $n \to \infty$. Finally, using dominated convergence theorem, we get that the convergence of the Laplace functional \eqref{chap4_eq:laplace_dnkb} to:
\aln{
&\exptn \bigg[ \exp\bigg\{-  \frac{1}{\rho^K}  W \sum_{p=1}^Q \varsigma_p \sum_{m=1}^K \sum_{q=1}^Q \bfe_p^t (M(B))^{m-1} \bfe_q  \nonumber \\
&\hspace{1.5cm} \int \exptn \bigg( 1 - \exp \bigg\{ - \sum_{j=1}^{U_1^\upq(Q,B)} \Delta^\upQ_{K-m,j}(B) f(x_j) \bigg\} \bigg) \lambda^{(B)}(\dtv(x_1, \ldots, x_B))\bigg\}  \bigg]. \label{chap4_eq:limitn_laplace_functional_dnkb}
}
Here we shall construct a point process which has the same Laplace functional as obtained in \eqref{chap4_eq:limitn_laplace_functional_dnkb}. Consider a Poisson random measure
\alns{
\poi^{(B)} = \sum_{l=1}^\infty \delta_{\pmb{\varrho}_l} = \sum_{l=1}^\infty \delta_{(\varrho_{l1}, \ldots, \varrho_{lB})}
}
on $\real^B$ with intensity measure $\lambda^{(B)}(\cdot)$ and independent of the random variable $W$. Let $(G^{(B)}, \boldsymbol{T}^{(B)})$ be a $\Xi$-valued random variable where $\Xi = \cup_{i \in \bbn} \{i\} \times \bbn^i$ with probability mass function
\alns{
& \prob \Big(G^{(B)} = g, \boldsymbol{T}^{(B)} = (t_1, \ldots, t_g) \Big) \\
&= \frac{1}{s_B \rho^K} \sum_{p=1}^Q \varsigma_p \sum_{m=1}^K \sum_{q=1}^Q \bfe_p (M(B))^{m-1} \bfe_q \prob( U_1^\upq (Q,B) = g) \prod_{j=1}^g \prob(\Delta_{K-m, j}^\upQ = t_j)
}
for all  $(t_1, \ldots, t_g) \in \bbn^g \mbox { and } g \in \bbn$, where $s_B$ is the normalizing constant that makes the above a probability measure on $\Xi$. Now consider a collection $\{(G_l^{(B)}, \boldsymbol{T}_l^{(B)}): l \ge 1\}$ of independent copies of $(G^{(B)}, \boldsymbol{T}^{(B)})$ and also independent of $W$ and $\poi^{(B)}$. Now the Cox cluster process
\alns{
N_*^{(K,B)} = \sum_{l=1}^\infty \sum_{k=1}^{G_l^{(B)}} T_{lk} \delta_{(s_B W)^{1/\alpha}\varrho_{lk}}
}
can be shown to have the same Laplace functional as in \eqref{chap4_eq:limitn_laplace_functional_dnkb} following the method discussed in \cite{bhattacharya:hazra:roy:2016}.

Now, we let $B \to \infty$ and using dominated convergence theorem once again, we get from the right hand side of \eqref{chap4_eq:limitn_laplace_functional_dnkb},
\aln{
&\exptn \bigg[ \exp\bigg\{-  \frac{1}{\rho^K}  W \sum_{p=1}^Q \varsigma_p \sum_{m=1}^K \sum_{q=1}^Q \bfe_p^t M^{m-1} \bfe_q \nonumber \\
&\hspace{2cm} \int \exptn \bigg( 1 - \exp \bigg\{ - \sum_{j=1}^{U_1^\upq(Q)} \Delta^\upQ_{K-m,j} f(x_j) \bigg\} \bigg) \lambda(\dtv\boldsymbol{x})\bigg\}  \bigg] \label{eq:Laplcae_fun_mult_fin}
}
Consider a $\Xi$-valued random variable $(G', \boldsymbol{T}')$ with the probability mass function
\aln{
&\prob(G'=g, \boldsymbol{T}'=(t_1, \ldots, t_g)) \nonumber \\
&= \frac{1}{s' \rho^K} \sum_{p=1}^Q \varsigma_p \sum_{m=0}^{K-1} \sum_{q=1}^Q \bfe_p^t M^{K-m-1} \bfe_q \prob(U_1^\upq (Q) =g) \prod_{j=1}^g \prob(\Delta^\upQ_{m,j} = t_j) \label{chap4_eq:limitB_Laplace_dnkb}
}
for every $(t_1, \ldots, t_g) \in \bbn^g$ and $g \in \bbn$, where $s'$ is the normalizing constant. Now consider a collection $\{(G'_l, \boldsymbol{T}'_l) : l \ge 1 \}$ of independent copies of $(G', \boldsymbol{T}')$ which is also independent of $\poi$ and $W$. Now the following point process
\alns{
N_*^{(K)} = \sum_{l=1}^\infty \sum_{k=1}^{G'_l} T'_{lk} \delta_{(s' W)^{1/\alpha} \xi_{lk}}
}
which has the same Laplace functional as described in \eqref{eq:Laplcae_fun_mult_fin}. It is easy to see that the right hand side of \eqref{chap4_eq:limitB_Laplace_dnkb}, becomes
\alns
{
&  \exptn \bigg[ \exp\bigg\{-  \frac{1}{\rho^K}  W \sum_{p=1}^Q \varsigma_p \sum_{m=0}^{K-1} \sum_{q=1}^Q \bfe_p^t M^{K-m-1} \bfe_q \\
&\hspace{2cm} \int \exptn \bigg( 1 - \exp \bigg\{ - \sum_{j=1}^{U_1^\upq(Q)} \Delta^\upQ_{m,j} f(x_j) \bigg\} \bigg) \lambda(\dtv\boldsymbol{x})\bigg\}  \bigg]\\
&= \exptn \bigg[ \exp\bigg\{-  \frac{1}{\rho^K}  W \sum_{p=1}^Q \varsigma_p \sum_{m=0}^{K-1} \sum_{q=1}^Q \bfe_p^t (\rho^{K-m-1} P + R^{K-m-1}) \bfe_q \\
&\hspace{2cm} \int \exptn \bigg( 1 - \exp \bigg\{ - \sum_{j=1}^{U_1^\upq(Q)} \Delta^\upQ_{m,j} f(x_j) \bigg\} \bigg) \lambda(\dtv\boldsymbol{x})\bigg\}  \bigg]
}
using rearrangement of terms and Theorem~1  of Chapter V (page~185) in \cite{athreya:ney:1972}. Here $R$ is as in Assumption \ref{chap4_ass:ass_branching_multi}

 Note that Theorem~1  of Chapter V (page~185) in \cite{athreya:ney:1972}  also implies
\alns{
& \frac{1}{\rho^K}  \sum_{p=1}^Q \varsigma_p \sum_{m=0}^{K-1} \sum_{q=1}^Q  \Big|\bfe_p^t  R^{K-m-1} \bfe_q \Big|
 \int \exptn \bigg( 1 - \exp \bigg\{ - \sum_{j=1}^{U_1^\upq(Q)} \Delta^\upQ_{m,j} f(x_j) \bigg\} \bigg)  \lambda(\dtv\boldsymbol{x})\\
& \le \Big( \frac{\rho_0}{\rho}  \Big)^K cQ \sum_{p=1}^Q \varsigma_p \sum_{m=0}^{K-1} \frac{1}{\rho_0^{m+1}}
}
for some $1 < \rho_0 < \rho$, which is negligible as $K \to \infty$ . Hence we can let $K \to \infty$ and the Laplace functional then becomes
\aln{
& \exptn \bigg[ \exp\bigg\{-   W \sum_{p=1}^Q \varsigma_p \sum_{m=0}^{\infty} \frac{1}{\rho^{m+1}} \sum_{q=1}^Q \bfe_p^t P \bfe_q \nonumber \\
&\hspace{2cm} \int \exptn \bigg( 1 - \exp \bigg\{ - \sum_{j=1}^{U_1^\upq(Q)} \Delta^\upQ_{m,j} f(x_j) \bigg\} \bigg) \lambda(\dtv\boldsymbol{x})\bigg\}  \bigg] \nonumber \\
& = \exptn \bigg[ \exp\bigg\{-   W \sum_{p=1}^Q \varsigma_p \sum_{m=0}^{\infty} \frac{1}{\rho^{m+1}} \sum_{q=1}^Q \bfe_p^t \vartheta \varsigma^t \bfe_q \nonumber \\
&\hspace{2cm} \int \exptn \bigg( 1 - \exp \bigg\{ - \sum_{j=1}^{U_1^\upq(Q)} \Delta^\upQ_{m,j} f(x_j) \bigg\} \bigg) \lambda(\dtv\boldsymbol{x})\bigg\}  \bigg] \nonumber \\
& =\exptn \bigg[ \exp\bigg\{-   W \sum_{p=1}^Q \varsigma_p \vartheta_p \sum_{m=0}^{\infty} \frac{1}{\rho^{m+1}} \sum_{q=1}^Q u_q \nonumber \\
&\hspace{2cm} \int \exptn \bigg( 1 - \exp \bigg\{ - \sum_{j=1}^{U_1^\upq(Q)} \Delta^\upQ_{m,j} f(x_j) \bigg\} \bigg) \lambda(\dtv\boldsymbol{x})\bigg\}  \bigg] \nonumber \\
& =\exptn \bigg[ \exp\bigg\{-   W  \sum_{m=0}^{\infty} \frac{1}{\rho^{m+1}} \sum_{q=1}^Q \varsigma_q \nonumber \\
&\hspace{2cm} \int \exptn \bigg( 1 - \exp \bigg\{ - \sum_{j=1}^{U_1^\upq(Q)} \Delta^\upQ_{m,j} f(x_j) \bigg\} \bigg) \lambda(\dtv\boldsymbol{x})\bigg\}  \bigg],  \label{chap4_eq:limit_laplacetransform}
}
where we used the fact that $P = \vartheta. \varsigma^t$ for $\vartheta^t. \varsigma =1$.

The last step will be to show that the expression obtained in \eqref{chap4_eq:limit_laplacetransform} is the expression for the Laplace functional of the point process $N_*$ as described in Theorem \ref{thm:main_thm_multi}.
To compute the Laplace functional of $N_*$, we will use auxiliary marked Cox process:
\alns{
P_* = \sum_{l=1}^\infty \delta_{\Big(G_l, T_{l1}, \ldots, T_{l,G_l}, ((\rho-1)^\inv W)^{1/\alpha} \pmb{\xi}_l \Big)}
}
with the state space as $N \times \real^\bbn$ for $N = \cup_{i=1}^\infty (\{i\} \times \bbn^i)$. We consider a positive, bounded and  continuous function $f$ on the metric space $N \times \real^\bbn$ which vanishes outside a neighbourhood of $N \times \{\boldsymbol{0}\}$. Then we get
\alns{
&\exptn \bigg(\exp \bigg\{ - P_*(f) \bigg\} \bigg) \nonumber \\
&= \exptn \bigg[  \exptn \bigg(\exp \bigg\{ - \sum_{l=1}^\infty f\Big(G_l, T_{l1}, \ldots, T_{lG_l}, ((\rho-1)^\inv W)^{1/\alpha} \pmb{\xi}_l \Big) \bigg\} \bigg| W \bigg) \bigg]. \nonumber
}
Now using the fact that $P_*$ conditioned on $W$ is a marked Poisson point process with independent marks and following Proposition 3.8 of \cite{resnick:1987},
we can conclude that the right hand side of above equality becomes
\alns{
\exptn \big[ \exp \bigg\{ - \int_{\rnz}  \exptn \bigg( 1- \exp \Big\{ - f\Big(G, T_1, \ldots, T_G, ((\rho-1)^\inv W)^{-1/\alpha} \boldsymbol{x}\Big) \Big\} \bigg| W \bigg) \lambda(\dtv \boldsymbol{x})\bigg\} \bigg].
}
Recall from \eqref{chap4_eq:hlconv_bn} that $\lambda(a. \boldsymbol{x}) = a^{-\alpha} \lambda(a)$ for every $a > 0$. Hence above expression equals
\alns{
\exptn \big[ \exp \bigg\{ - (\rho-1)^\inv W\int_{\rnz}  \exptn \bigg( 1- \exp \Big\{ - f \Big(G, T_1, \ldots, T_G,  \boldsymbol{x}\Big) \Big\} \bigg) \lambda(\dtv \boldsymbol{x})\bigg\} \bigg].
}

Let $f': \real \to \real_+$ be a bounded, continuous function vanishing on a neighbourhood of $0$. Then
\alns{
f(g, t_1, \ldots, t_g, \boldsymbol{x}) = \sum_{m=1}^g t_m f'(x_m)
}
is a positive, bounded and continuous function on $N \times \real^\bbn$ vanishing in a neighbourhood of $N \times \{\boldsymbol{0}\}$. Finally, we obtain
\alns{
& \exptn \bigg( \exp \bigg\{ - P_*(f) \bigg\} \bigg) \nonumber \\
&= \exptn \bigg( \exp \bigg\{ - N_*(f') \bigg\} \bigg) \nonumber \\
& = \exptn \bigg[ \exp \bigg\{ - (\rho-1)^\inv W \int_{\rnz} \exptn \bigg( 1- \exp\Big\{  - \sum_{j=1}^G T_j f'(x_j)\Big\} \bigg) \lambda(\dtv \boldsymbol{x}) \bigg\} \bigg] \nonumber \\
& = \exptn \bigg[ \exp \bigg\{ - (\rho-1)^\inv W \int_{\rnz}  \sum_{q=1}^Q \varsigma_q \sum_{g=1}^ \infty \prob(U_1^\upq(Q) =g)   \nonumber \\
&\;\;\;\;\;\;\;\;\;\;\;\;\;\;\;\;\;\;\;\;\;\;\;\;\;\;\;\;\;\;\;\;\;\;\;\;\;\;\;\;\;\;\;\;\;\;\;\;\;\;\;\;\;\;\;\;\;\;\;\;\;\;\;\;\;\;\; \exptn \bigg( 1- \exp\Big\{  - \sum_{j=1}^g T_j f'(x_j)\Big\} \bigg)\lambda(\dtv\boldsymbol{x}) \bigg\} \bigg] \nonumber \\
& = \exptn \bigg[ \exp \bigg\{ - (\rho-1)^\inv W \int_{\rnz}  \sum_{q=1}^Q \varsigma_q \sum_{g=1}^ \infty \prob(U_1^\upq(Q) =g)  (\rho-1) \nonumber \\
& \hspace{2cm}  \sum_{m=0}^ \infty \dfrac{1}{\rho^{m+1}} \sum_{t_1, \ldots, t_g \in \bbn^g} \prod_{j=1}^g \prob(\Delta_{m,j}^\upQ =t_j)   \bigg( 1- \exp\Big\{  - \sum_{j=1}^g t_j f'(x_j)\Big\} \bigg) \lambda(\dtv \boldsymbol{x}) \bigg\} \bigg] \nonumber \\
&= \exptn \bigg[ \exp \bigg\{ - W   \sum_{q=1}^Q \varsigma_q    \sum_{m=0}^ \infty \dfrac{1}{\rho^{m+1}} \int_{\rnz} \exptn   \bigg( 1- \exp\Big\{  - \sum_{j=1}^{U_1^\upq(Q)} \Delta_{m,j}^\upQ f'(x_j)\Big\} \bigg) \lambda(\dtv \boldsymbol{x}) \bigg\} \bigg].
}
which is same as that obtained in \eqref{chap4_eq:limit_laplacetransform} and thus $N_*$ is the limit point process.
\end{proof}

\noindent{\bf{Acknowledgements.}} This research was partially supported by the project RARE-318984 (a Marie Curie FP7 IRSES
Fellowship). Zbigniew Palmowski was partially supported by National Science Centre Grant under the grant 2015/17/B/ST1/01102. Ayan Bhattacharya acknowledges the warm hospitality of the Mathematical Institute, University of Wroc\l aw for his visit during the period September 10 - October 19, 2015. Authors are also thankful to Rajat Subhra Hazra for reading the first draft of the paper carefully and giving valuable suggestions.\\

\bibliographystyle{plain}	
\bibliography{ayanbib}

\newpage

\noindent Ayan Bhattacharya ({\tt ayanbhattacharya.isi@gmail.com})\\
Krishanu Maulik ({\tt krishanu@isical.ac.in})\\
Parthanil Roy ({\tt parthanil.roy@gmail.com})\\
Statistics and Mathematics Unit\\
Indian Statistical Institute\\
203 B. T. Road\\
Kolkata 700108, India\\

\noindent Zbigniew Palmowski ({\tt zbigniew.palmowski@gmail.com})\\
Faculty of Pure and Applied Mathematics\\
Wroc\l aw University of Science and Technology\\
ul. Wyb. Wyspia\'nskiego 27\\
50-370 Wroc\l aw, Poland\\

\end{document}